\documentclass[
    11pt,
    a4paper
]{article}

\usepackage[utf8]{inputenc}
\usepackage[english]{babel}
\usepackage{amsmath}
\usepackage{amssymb}
\usepackage{amsthm}
\usepackage{bbm}
\usepackage{mathtools}
\usepackage{enumitem}
\usepackage[numbers]{natbib}
\usepackage{graphicx}
\usepackage{subcaption}
\usepackage{xcolor}
\usepackage{hyperref}
\usepackage[normalem]{ulem} 

\usepackage[top=3cm,bottom=3cm,left=3cm,right=3cm]{geometry}

\theoremstyle{definition}
\newtheorem{definition}{Definition}[section]
\newtheorem{remark}[definition]{Remark}

\newtheorem{assumption}{Assumption}

\theoremstyle{plain}
\newtheorem{theorem}[definition]{Theorem}

\newtheorem{lemma}[definition]{Lemma}

\definecolor{forestgreen}{RGB}{34,139,34}

\newcommand{\NN}{\mathbb{N}}

\newcommand{\RR}{\mathbb{R}}

\newcommand{\ind}{\mathbbm{1}}

\newcommand{\PP}{\mathbf{P}}
\newcommand{\EE}{\mathbf{E}}

\title{Maxima of stationary systems of randomly time-changed L\'evy particles}
\author{Ioan Scheffel$^{a}$}
\date{$^{a}$ {\small Institute for Stochastics and Applications, University of Stuttgart, D-70563 Stuttgart, Germany}}

\begin{document}
\maketitle
\begin{abstract}
 In this work, we consider maxima of systems of randomly time-changed L\'evy particles.
We give a general construction to obtain infinite-dimensional classes $\{Z^{\alpha}\}$ of stationary
max-infinitely divisible (max-id) processes.
  These classes are indexed by admissible mass functions $\alpha$, which induce state-dependent time changes of the underlying L\'evy particles. This gives a generalization of the well-known (L\'evy--)Brown--Resnick process $Z^{1}$. In contrast to $\alpha\equiv 1$, the variability of non-constant mass functions $\alpha$ changes the
  dependence structure of the max-id process and goes beyond the max-stable setting while preserving stationarity. We then
  explore
  the extent of the so-called max-domain of attraction (MDA) of a given (L\'evy--)Brown--Resnick process $Z^1$, by
  studying convergence of rescaled maxima of independent copies of
  $Z^{\alpha}$ to $Z^1$.
  Thus, our work combines potential theory for Markov processes and extreme value theory to yield a novel, infinite-dimensional, and interpretable class $\{Z^{\alpha}\}$ of
  stationary processes in the MDA of a given (L\'evy--)Brown--Resnick process $Z^{1}$.
  So far, results on the extent of such domains have been scarce in the literature.
\end{abstract}

{\bf MSC 2020 subject classifications:}
  60G70, 60G10, 60G51, 60G55, 60J55

{\bf Keywords:}
extreme value theory, max-infinitely divisible process, max-domain of attraction

\section{Introduction}
\subsection{Background and literature review}
Given a Markov process $(X_t)_{t\ge 0}$ and a collection of points $\{U^{(i)}\}_{i\in \NN}$,
we consider the collection of particles obtained by independently starting in each point $U^{(i)}$ an i.i.d. copy $(X_t^{(i)})_{t\ge 0}$ of the Markov process $(X_t)_{t\ge 0}$, that is,
\begin{align*}
  \left\{
  \left(
  X_t^{(i)}(U^{(i)})
  \right)_{t\ge 0}
  \right\}_{i\in \NN}
  \,.
\end{align*}
We call this a Markovian particle system.
For a single particle $(X_t(U))_{t\ge 0}$, stationarity holds if and only if there exists an invariant probability measure $\mu$ of the process $(X_t)_{t\ge 0}$, by setting $U\sim \mu$.
Extreme value analysis naturally shifts the viewpoint from a single particle to
a system of infinitely many particles, where the starting points $\{U^{(i)}\}_{i\in\NN}$ form a
so-called Poisson point process (PPP).
For
a distribution function $F$ with density $F'$,
it is, for example, possible to choose the PPP $\{U^{(i)}\}_{i\in\NN}$ such that
$
  F(x)
   =
\PP
  \left[
  \max_{i\in\NN}U^{(i)}\le x
  \right]
$, for $x\in\RR$.
If the arising Markovian particle system is stationary,
then the maximum process
\begin{align}
  \label{eq:max_id}
  Z_t
    \ = \
    \max_{i\in\NN}
    X_t^{(i)}(U^{(i)})
    \,,
    \qquad t\ge 0
    \,,
 \end{align}
 is stationary as well,
 with marginal distribution function $F$. Such a process (regardless of stationarity) is called max-infinitely divisible (max-id), in analogy to the notion of infinite divisibility for sums rather than maxima.
 It follows from~\cite{brownPropertyPoissonProcesses1970} that
 the Markovian particle system (and the max-id process) is stationary if and only if $\mu(\mathrm{d}x)=F'(x)/F(x)\ind\{F(x)>0\}\mathrm{d}x$ is invariant for $(X_t)_{t\ge 0}$ in the sense that
\begin{align}
  \label{eq:inv}
  \int_{\RR}
  \PP[X_t(x)\in A]
  \,\mu(\mathrm{d}x)
  \ = \
  \mu(A)
  \qquad\text{for all Borel sets}\ A\ \ \text{and all} \ t\ge 0\,.
\end{align}
In this setting the invariant measure $\mu$ is necessarily infinite,
so that, unlike in the single-particle case discussed above,
it cannot be normalized to a probability measure.

A possible choice for $F$ is the Gumbel distribution function
 \begin{align*}
 F(x)
 \ = \
 \exp(-\exp(-x))
 \,,\qquad x\in\RR
 \,,
 \end{align*}
 with
 $\mu(\mathrm{d}x)\ = \ \exp(-x)\mathrm{d}x$.
 Any invariant process--measure pair~$((X_t)_{t\ge 0}, \exp(-x)\mathrm{d}x)$ in the sense of \eqref{eq:inv} then yields a stationary max-id process with Gumbel margins, as in \eqref{eq:max_id}.
Brown and Resnick \cite{brownExtremeValuesIndependent1977}
were the first to construct such a process, based on the observation that this specific choice of $\mu$ is invariant for the drifted Brownian motion $(B_t-t/2)_{t\ge 0}$.
The resulting process $\zeta_{\mathrm{BR}}$, called Brown--Resnick process, is stationary with Gumbel margins:
\begin{align}
  \label{eq:BR}
  \zeta_{\mathrm{BR}}(t)
  \ := \
  \max_{i\in\NN}
  \left(
  B^{(i)}_t-t/2\ +\ U^{(i)}
  \right)
  \,,
  \qquad
  t\ge 0
  \,.
\end{align}
Even though the single particles $B^{(i)}_t-t/2 + U^{(i)}$ are non-stationary (they converge to $-\infty$ for $t\to\infty$), the
particle system $\{(B_t^{(i)}-t/2 + U^{(i)})_{t\ge 0}\}_{i\in\NN}$, and the max-id process $\zeta_{\mathrm{BR}}$ are stationary.
Brown and Resnick also showed that $\zeta_{\mathrm{BR}}$ is
not only max-id, but even max-stable.
In the context of this work, with the convention that the margins of the max-stable process $\zeta$ are Gumbel distributed and the process is indexed over $[0,\infty)$, this means that, for all $n\in\NN$,
\begin{align*}
  \left(
  \max_{k=1,\ldots,n}
  \zeta^{(k)}(t)
  \ - \
  \log(n)
  \right)_{t\ge 0}
  \ = \
  \left(
  \zeta(t)
  \right)_{t\ge 0}
  \qquad\text{in finite-dimensional distributions,}\qquad
\end{align*}
where $\zeta^{(k)}$, for $k\in\{1,\ldots,n\}$, are i.i.d. copies of $\zeta$.
De Haan showed in~\cite{haanSpectralRepresentationMaxstable1984}
that all max-stable processes (up to marginal transformations) are max-id, similar in structure to \eqref{eq:BR}.

Max-stable processes obtained from systems of L\'evy particles
beyond the drifted Brownian motion
have been studied in \cite{stoevErgodicityMixingMaxstable2008a},
where they show stationarity and Gumbel margins of the process
\begin{align*}
  \zeta_{L}(t)
  \ := \
  \max_{i\in\NN}
  \left(
  L_t^{(i)}
  \ + \
  U^{(i)}
  \right)
  \,,
  \qquad
  t\ge 0
  \,,
\end{align*}
 if the L\'evy process $(L_t)_{t\ge 0}$ satisfies $\EE[\exp(L_1)]=1$.
We refer to such processes as L\'evy--Brown--Resnick processes.
It is then easy to see that the L\'evy process $(L_t)_{t\ge 0}$ has invariant measure $\exp(-x)\mathrm{d}x$, so the method of \cite{brownPropertyPoissonProcesses1970} applies;
see also
\cite{engelkeMaxstableProcessesStationary2015} for a
generalization of L\'evy--Brown--Resnick processes
to the case where $(L_t)_{t\ge 0}$ satisfies
$\EE[\exp(L_1)]\neq 1$.

We note that
Kabluchko et al.
showed in their seminal paper
\cite{kabluchkoStationaryMaxstableFields2009}
that the
stationarity
of
$\zeta_{\mathrm{BR}}$, among more general max-stable processes, can be derived using entirely different techniques
that apply, for example, to Gaussian fields; see also \cite{kabluchkoStationarySystemsGaussian2010} and \cite{molchanovStationarityMultivariateParticle2013a} for a complete characterization of uni- and multivariate stationary Gaussian particle systems, respectively.

\subsection{Motivation and contribution of this work}
All specific examples of stationary max-id processes in the literature,
constructed from non-stationary single particles,
are (up to marginal transformations) max-stable.
The goal of this work is to generalize this construction
by allowing for random time change of the underlying L\'evy particles.
Starting from a L\'evy--Brown--Resnick process $\zeta_L$,
we construct infinite-dimensional classes $\{Z^{\alpha}\}$ of stationary max-id processes.
These classes are indexed by admissible mass functions $\alpha$ that induce state-dependent time changes of the underlying L\'evy particles.
In contrast to $\alpha\equiv 1$, which yields the
L\'evy--Brown--Resnick process $\zeta_L=Z^{1}$,
non-constant mass functions $\alpha$ allow for variability in the dependence structure of the max-id process that goes beyond the max-stable setting.
We then use this variability in $\alpha$ to
explore the extent of the so-called maximum domain of attraction (MDA) of a given L\'evy--Brown--Resnick process $\zeta_L$.
To be more precise, we study, for all admissible $\alpha$, the convergence
\begin{align*}
  \left(
  \max_{k=1,\ldots,n}
  Z_t^{\alpha,(k)}
  \ - \
  \log(n)
  \right)_{t\ge 0}
  \ \to \
  \left(
  \zeta_L(t)
  \right)_{t\ge 0}
  \qquad\text{weakly as $n\to\infty$,}\qquad
\end{align*}
where $(Z_t^{\alpha,(k)})_{t\ge 0}$, for $k\in\{1,\ldots,n\}$, are i.i.d. copies of $(Z^{\alpha}_t)_{t\ge 0}$.
More generally, we combine potential theory for Markov processes with extreme value theory to yield a novel, infinite-dimensional, and interpretable class of stationary processes in the MDA of a given L\'evy--Brown--Resnick process $\zeta_L$.
So far, results on the extent of such domains have been scarce in the literature.
\subsection{Structure of this work}
 In Section~\ref{sec:stat_part_system} we recall the mechanism behind the method of~\cite{brownPropertyPoissonProcesses1970}.
 In Section~\ref{sec:perturbation} we perturb the process--measure pair of a L\'evy--Brown--Resnick process $\zeta_L=Z^1$
  by random time change induced by admissible mass functions $\alpha$.
  In Section~\ref{sec:max-id} we give a general construction of
  stationary max-id process $(Z^{\alpha}_t)_{t\ge 0}$.
  Finally, in Section~\ref{sec:mda}
  we show that the extremal behavior of $(Z^{\alpha}_t)_{t\ge 0}$ links back to $Z^1$ via $(Z^{\alpha}_t)_{t\ge 0}\in \mathrm{MDA}(Z^{1})$.
\subsection{Notation}
Let $I\subset[0,\infty)$ be an interval and define $\mathbb{D}(I)$ to be the set of
c\`adl\`ag functions on $I$ endowed with the Skorokhod topology, and let $\mathbb{C}(I)$ be the continuous functions endowed with the uniform topology.
The term Markov process is used exclusively to refer to real-valued, time-homogeneous Markov processes with paths in
$\mathbb{D}([0,\infty))$.
For a Markov process $(X_t)_{t\ge 0}$ we denote $X_t(x)$ to be the process at time $t\ge 0$, started in $x\in\RR$ at time $0$.
We denote $\mathcal{B}(\mathcal{X})$ to be the Borel $\sigma$-algebra of the topological space $\mathcal{X}$.
\section{Stationary particle systems}
\label{sec:stat_part_system}
To showcase the method of~\cite{brownPropertyPoissonProcesses1970}, we first introduce Poisson point processes (PPP) and explain how to construct particle systems by attaching to each point an independent copy of a Markov process. Finally we recap a theorem of \cite{brownPropertyPoissonProcesses1970} which states that the resulting particle system is stationary if and only if the intensity measure of the PPP is invariant for the  Markov process.

\begin{definition}[Poisson point process]
  Let $(\mathcal{X}_{}, \mathcal{C}, \mu)$ be a measure space satisfying $\{x\}\in \mathcal{C}$ for all $x\in \mathcal{X}$ and $\mu(\mathcal{X})=\infty$.
  A random collection $\{U^{(i)}\}_{i\in\NN}
  $ of points in $\mathcal{X}$ is called Poisson point process having intensity measure $\mu$ if for every $m\in\NN$ and corresponding disjoint sets $C_1,\ldots,C_m \in \mathcal{C}$ and non-negative (including $+\infty$) integers $r_1,\ldots,r_m $ it holds that
  \begin{align*}
    \PP
    \left[
    \sum_{i\in\NN}
    \ind\{U^{(i)}\in C_j\}
           \
    =
    \
    r_j
    \qquad\text{for all}\
    j\in \{1,\ldots,m\}
    \right]
    \ = \
    \prod_{j=1}^m
    p(\mu(C_j),r_j)
    \,,
  \end{align*}
  where
  \begin{align*}
    p(\lambda,k)
    \ = \
    \begin{cases}
      \frac{\lambda^k \exp(-\lambda)}{k!}
      &\qquad\text{if}\qquad \lambda <\infty\quad \text{and}\quad  k<\infty\,,\\
      1
      &\qquad\text{if}\qquad \lambda =\infty\quad \text{and}\quad  k=\infty
      \qquad\text{or}\qquad \lambda =0 \quad \text{and}\quad  k=0\,,
      \\
      0&\qquad\text{elsewhere}.
    \end{cases}
  \end{align*}
\end{definition}
\begin{remark}
A collection $\{U^{(i)}\}_{i\in\NN}$, in contrast to a set $\{U^{(i)}\mid i\in\NN \}$, can contain the same element more than once.
The existence of a PPP with prescribed intensity measure is guaranteed if, for example, the intensity measure is $\sigma$-finite.
\end{remark}
Let $(\RR,\mathcal{B}(\RR),\mu)$ be a $\sigma$-finite
measure space.
Given
a PPP $\{U^{(i)}\}_{i\in\NN}$ on the real line
with intensity measure $\mu$,
we can attach to each point $U^{(i)}$ an i.i.d.~copy $(X_t^{(i)})_{t\ge 0}$ of a  Markov process $(X_t)_{t\ge 0}$, independent of $\{U^{(i)}\}_{i\in\NN}$, to obtain the random collection
\begin{align}
  \label{eq:ps_full}
\left\{
\left(
X^{(i)}_t(U^{(i)})
  \right)_{t\ge 0}
  \right\}_{i\in\NN}
  \qquad\text{of sample paths in}\qquad
  \mathbb{D}([0,\infty))
\,.
\end{align}
We refer to this collection as the particle system associated with
the process--measure pair $((X_t)_{t\ge 0},\mu)$, and denote it by $\mathfrak{P}((X_t)_{t\ge 0}, \mu)$.
For a finite sequence of times $0\le t_1<\cdots<t_\ell<\infty,\ell\in\NN$, we define a finite-dimensional particle system by
\begin{align}
  \label{eq:ps_fd}
  \mathfrak{P}_{t_1,\ldots,t_\ell}((X_t)_{t\ge 0},\mu)
  \ := \
\left\{
\left(
  X^{(i)}_{t_1}(U^{(i)}),
  \ldots,
  X^{(i)}_{t_\ell}(U^{(i)})
\right)
  \right\}_{i\in\NN}
  \qquad\text{with points in}\qquad
  \RR^\ell
\,.
\end{align}
We say that the full particle system $\mathfrak{P}((X_{t})_{t\ge 0}, \mu)$ is stationary if, for all such finite collections of time points and all $h\ge 0$,
it holds that
\begin{align}
  \label{eq:stat}
  \mathfrak{P}_{t_1,\ldots,t_\ell}((X_t)_{t\ge 0},\mu)
  \qquad\text{has the same distribution as}\qquad
  \mathfrak{P}_{t_1+h,\ldots,t_\ell+h}((X_t)_{t\ge 0},\mu)
  \,.
\end{align}
It will be useful to have more general transformations of the particle system. To this end,
  let $(E,\mathcal{E})$ be a measurable space and $g$ be a (deterministic) measurable function from $(\mathbb{D}([0,\infty)), \mathcal{B}(\mathbb{D}([0,\infty)))$ to $(E,\mathcal{E})$.
  We define the $g\text{--transformed}$ particle system by
  \begin{align*}
    \mathfrak{P}_{g}
    \left(
    (X_t)_{t\ge 0},\mu
    \right)
    \ = \
    \left\{
    g
    \left(
    X_t^{(i)}(U^{(i)})
    \right)
    _{t\ge 0}
    \right\}_{i\in\NN}
    \qquad\text{with points in}\qquad
    E
    \,.
  \end{align*}
  The following theorem is a direct consequence of \cite[Theorem~1]{brownPropertyPoissonProcesses1970}, see also Section~3 therein.
\begin{theorem}
  \label{thm:brown}
  Let $(X_t)_{t\ge 0}$ be a  Markov process,
  $(\RR,\mathcal{B}(\RR),\mu)$ a $\sigma$-finite measure space with $\mu(\RR)=\infty$,
  and $g$ a measurable function from $(\mathbb{D}([0,\infty)), \mathcal{B}(\mathbb{D}([0,\infty)))$ to a measurable space $(E,\mathcal{E})$.
  Then the following holds:
  \begin{enumerate}
    \item
    The particle system $\mathfrak{P}_g((X_t)_{t\ge 0},\mu)$ defines a PPP on $(E,\mathcal{E})$ with intensity measure
    \begin{align*}
      \mu_g(A)
      \ = \
      \int_{\RR}
      \PP
      \left[
      g
      \left(
      (X_{t}(x))_{t\ge 0}
      \right)
      \in A
      \right]
      \,
      \mu(\mathrm{d}x)
      \qquad\text{for}\ A\in \mathcal{E}
      \,.
    \end{align*}
        \item
    The particle system $\mathfrak{P}((X_t)_{t\ge 0},\mu)$ is stationary if and only if
    the process--measure pair $((X_t)_{t\ge 0},\mu)$ is invariant.
  \end{enumerate}
\end{theorem}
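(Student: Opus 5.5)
Part 1 is the marking and mapping theorem for Poisson point processes, so the plan is to reduce to that. Attaching to each point $U^{(i)}$ an independent copy of the Markov process, started there, is an independent marking. The mark kernel is $K(x,\cdot)=\PP[(X_t(x))_{t\ge 0}\in\cdot\,]$ on $\mathbb{D}([0,\infty))$. This kernel is measurable in $x$ because $x\mapsto\PP[X_{t_1}(x)\in B_1,\ldots,X_{t_\ell}(x)\in B_\ell]$ is measurable for time-homogeneous Markov processes, and cylinder sets generate $\mathcal{B}(\mathbb{D}([0,\infty)))$.

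By the marking theorem, $\{(U^{(i)},X^{(i)}(U^{(i)}))\}_{i\in\NN}$ is then a PPP on $\RR\times\mathbb{D}([0,\infty))$ with intensity $\mu(\mathrm{d}x)K(x,\mathrm{d}\omega)$. I would verify this concretely through the Laplace functional. Conditioning on the points and using independence of the marks gives, for nonnegative measurable $f$,
\[
\EE\Bigl[\exp\Bigl(-\sum_i f(U^{(i)},X^{(i)})\Bigr)\Bigr]=\exp\Bigl(-\int_{\RR}\int\bigl(1-e^{-f(x,\omega)}\bigr)K(x,\mathrm{d}\omega)\,\mu(\mathrm{d}x)\Bigr).
\]
Next I would apply the measurable map $(x,\omega)\mapsto g(\omega)$. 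The image of a PPP under a measurable map is again a PPP, with the push-forward intensity. Here that push-forward is exactly $\mu_g(A)=\int\PP[g((X_t(x))_{t\ge0})\in A]\,\mu(\mathrm{d}x)$.

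The one subtlety is that $\mu_g$ may fail to be $\sigma$-finite. The paper's definition of a PPP allows infinite counts, with $p(\infty,\infty)=1$, so the counting-law formula should be checked directly instead of quoting a theorem that assumes $\sigma$-finiteness. On a set $A$ with $\mu_g(A)<\infty$, I would get the Poisson law from the Laplace functional, choosing $f=\sum_j s_j\ind_{C_j}$ to obtain the joint law over disjoint sets. When $\mu_g(A)=\infty$, take $f=s\ind_A$ with $s>0$; the Laplace functional is then $0$, so the count of points in $A$ is almost surely infinite. I expect this infinite-mass bookkeeping to be the only delicate point of part 1.

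For part 2, apply part 1 with $g$ equal to the projection onto the coordinates $(t_1,\ldots,t_\ell)$. Then $\mathfrak{P}_{t_1,\ldots,t_\ell}$ is a PPP on $\RR^\ell$ with intensity $\nu_{t_1,\ldots,t_\ell}(A)=\int\PP[(X_{t_1}(x),\ldots,X_{t_\ell}(x))\in A]\,\mu(\mathrm{d}x)$. A PPP's law is determined by its intensity, since the intensity fixes the distribution of all joint counts. So stationarity \eqref{eq:stat} is equivalent to $\nu_{t_1+h,\ldots,t_\ell+h}=\nu_{t_1,\ldots,t_\ell}$ for all times and all $h$.

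Conversely, the intensity determines the law, since $\mu_g(A)=\EE[\#\text{points in }A]$, so equal laws force equal intensities. This gives the direction stationarity $\Rightarrow$ invariance. Taking $\ell=1$, $t_1=0$ yields $\int\PP[X_h(x)\in A]\,\mu(\mathrm{d}x)=\mu(A)$, which is \eqref{eq:inv}.

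For invariance $\Rightarrow$ stationarity, I would use the Markov property at time $h$:
\[
\PP[(X_{t_j+h}(x))_j\in A]=\int\PP[(X_{t_j}(y))_j\in A]\,\PP[X_h(x)\in\mathrm{d}y].
\]
Integrating against $\mu(\mathrm{d}x)$ and applying Tonelli, the inner measure $\int\PP[X_h(x)\in\mathrm{d}y]\,\mu(\mathrm{d}x)$ equals $\mu(\mathrm{d}y)$ by \eqref{eq:inv}. This gives $\nu_{t_1+h,\ldots}=\nu_{t_1,\ldots}$.

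The main care point in part 2 is measurability of the kernel $y\mapsto\PP[(X_{t_j}(y))_j\in A]$, which time-homogeneity provides. Since the statement is attributed to \cite[Theorem~1]{brownPropertyPoissonProcesses1970}, I would cite it for the marking step and only spell out the reductions above.
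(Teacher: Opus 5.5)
Your proposal is correct and takes the same route as the paper. The paper gives no argument beyond calling the theorem a direct consequence of Brown's Theorem~1 (the Poisson displacement/marking result) and Section~3 there; you make that reduction explicit through marking plus mapping, the Laplace-functional check that also covers a possibly non-$\sigma$-finite $\mu_g$, and the Markov property with Tonelli to show that $\nu_{t_1+h,\ldots,t_\ell+h}=\nu_{t_1,\ldots,t_\ell}$ for all times and $h$ is equivalent to invariance. One wording slip: in part 2, ``Conversely, the intensity determines the law'' should read that the law determines the intensity, since $\mu_g(A)$ is the expected number of points in $A$, which is what you actually use.
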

\section{Perturbation by random time change}
\label{sec:perturbation}
Given the invariant process--measure pair
$((L_t)_{t\ge 0}, \exp(-x)\mathrm{d}x)$ of a L\'evy--Brown--Resnick process, we
construct
a new pair with different invariant intensity and dependence structure.
Our approach is tailored to invariant measures having infinite mass and is based on
random time change.
The main result of this section, which is also used subsequently in the paper, is Theorem~\ref{thm:pert_inv}.
Readers interested only in the main result may skip the remainder of the section and return to it later for the precise mathematical statements.

We first introduce the necessary definitions and assumptions. Then,
we
establish existence of time-changed processes and provide a useful representation of random clocks.
\begin{definition}[Random clock]
\label{def:random_clock}
  Given a L\'evy process $(L_t)_{t\ge 0}$ and a bounded function $\alpha:\RR \to (0,\infty)$, we define
\begin{align*}
  A_x(t)
  \ = \
  \int_0^t
  \alpha(L_r+x)
  \,\mathrm{d}r
  \qquad\text{and}\qquad
  T_x(t)
  \ = \
  \inf\{s\ge 0 \mid A_x(s)> t\}
  \,,\qquad
  t\ge 0 \,, x\in\RR\,.
\end{align*}
  We call $(T_x(t))_{t\ge 0}$ the random clock, $(A_x(t))_{t\ge 0}$ the inverse random clock, and $\alpha$ the mass function.
\end{definition}
Note that $A_x$, for all $x\in\RR$, is a strictly increasing, continuous random function, with inverse $T_x$, which we denote as $T_x=(A_x)^{-1}$.
\begin{assumption}[Assumption on mass function]
  \label{asu:alpha}
  The mass function $\alpha$
 is continuous
  and
  satisfies
\begin{align}
  \label{cond:alpha}
  0
  \ < \
  \underline{\alpha}
  \ := \
  \inf_{z\in\RR}
  \alpha(z)
  \ \le \
  \sup_{z\in\RR}
  \alpha(z)
  \ =: \
  \overline{\alpha}
  \ < \
  \infty
  \qquad\text{and}\qquad
  \lim_{z\to\infty}
  \alpha(z)
  \ = \ 1
  \,.
\end{align}
\end{assumption}

\begin{remark}[Assumption~\ref{asu:alpha}]
  By virtue of the first part we
  obtain existence and infinite lifetime of the time-changed process that we are going to consider. The second part of the assumption is relevant only in Section~\ref{sec:mda}, where convergence plays a key role.
\end{remark}

\begin{lemma}[Existence of time-changed process]
  \label{lem:existence_tcp}
  Let $(L_t)_{t\ge 0}$ be a L\'evy process and $\alpha$ a
  mass function satisfying Assumption~\ref{asu:alpha}. Then the time-changed process
  \begin{align*}
    X_t(x)
    \ = \
    L_{T_x(t)}
    \ + \
    x
    \,,\qquad t\ge 0, x\in\RR\,,
  \end{align*}
  is a Markov process.
\end{lemma}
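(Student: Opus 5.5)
The plan is to recognise $X$ as a time change of the Markov process $Y_t(x):=L_t+x$ by the inverse of a continuous additive functional. Then the strong Markov property of the L\'evy process, applied at the stopping times $T_x(s)$, transfers to $X$.

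First, the clock is well defined. By Assumption~\ref{asu:alpha} we have $\underline{\alpha}t\le A_x(t)\le \overline{\alpha}t$. Hence $A_x$ is continuous and strictly increasing, with $A_x(t)\to\infty$, so $T_x=(A_x)^{-1}$ is a continuous, strictly increasing bijection of $[0,\infty)$ with $t/\overline{\alpha}\le T_x(t)\le t/\underline{\alpha}$. In particular the lifetime is infinite and $X_t(x)=L_{T_x(t)}+x$ is defined for all $t$. Since $L$ is c\`adl\`ag and $T_x$ is continuous and increasing, $X(x)$ is c\`adl\`ag. Measurability of $A_x(t)$ in $\omega$ follows from joint measurability of $(r,\omega)\mapsto\alpha(L_r+x)$, since $\alpha$ is continuous.

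Second, $T_x(s)$ is a stopping time for the (right-continuous, augmented) natural filtration $(\mathcal{F}_t)$ of $L$, because $\{T_x(s)\le u\}=\{A_x(u)\ge s\}\in\mathcal{F}_u$ and $A_x$ is adapted. Set $\mathcal{G}_s:=\mathcal{F}_{T_x(s)}$; then $X_s(x)$ is $\mathcal{G}_s$-measurable.

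The key step is the additivity identity. Let $\tau=T_x(s)$, $\tilde L_r:=L_{\tau+r}-L_\tau$, and $y:=X_s(x)=L_\tau+x$. Then
\begin{align*}
A_x(\tau+u)-A_x(\tau)=\int_0^u\alpha(\tilde L_r+y)\,\mathrm{d}r=:\tilde A_y(u),
\end{align*}
and hence $T_x(s+t)=\tau+\tilde T_y(t)$, where $\tilde T_y=(\tilde A_y)^{-1}$. Consequently
\begin{align*}
X_{s+t}(x)=\tilde L_{\tilde T_y(t)}+y=\Phi\bigl(\tilde L, y\bigr)_t
\end{align*}
for a fixed measurable functional $\Phi$ with $X_t(y)=\Phi(L,y)_t$. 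By the strong Markov property of L\'evy processes, $\tilde L$ is independent of $\mathcal{G}_s$ and has the law of $L$, while $y$ is $\mathcal{G}_s$-measurable. Therefore
\[
\PP[X_{s+t}(x)\in B\mid\mathcal{G}_s]=P_t(X_s(x),B),\qquad P_t(y,B):=\PP[X_t(y)\in B].
\]
This is the time-homogeneous Markov property with respect to $(\mathcal{G}_s)$. Measurability of $y\mapsto P_t(y,B)$ follows from joint measurability of $\Phi$ in $(\omega,y)$. The Chapman--Kolmogorov equations then follow automatically.

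The main obstacle is bookkeeping rather than an idea. We must verify that $\Phi$ is jointly measurable, so that the freezing lemma applies to the conditional expectation, and that the strong Markov property is available at the random times $T_x(s)$. For the latter, I would first check that these times are finite, which holds since $T_x(s)\le s/\underline{\alpha}$. Alternatively, one could cite the general theory of time changes by inverses of continuous additive functionals of a strong Markov process (e.g.\ Sharpe, or Blumenthal--Getoor), of which this is a special case with additive functional $\int_0^t\alpha(Y_r)\,\mathrm{d}r$.
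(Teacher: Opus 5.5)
Your proof is correct, but it takes a different route from the paper. The paper does not check the Markov property by hand. It invokes the general theorem on time changes by inverses of continuous additive functionals (Blumenthal--Getoor, (V.2.11)), which yields a Markov process with c\`adl\`ag paths up to a lifetime $\xi$. Its only own work is to show $\xi=\infty$ from $A_x(t)\ge\underline{\alpha}\,t\to\infty$, which is exactly your first step; you mention this citation only as an alternative at the end. You instead verify the Markov property directly, using four ingredients: $T_x(s)\le s/\underline{\alpha}$ is a bounded stopping time; the additivity identity $A_x(\tau+u)-A_x(\tau)=\tilde A_y(u)$ holds, exploiting the spatial homogeneity of $L$; $L$ has the strong Markov property at $\tau=T_x(s)$; and the freezing lemma applies. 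These steps hold up, including $T_x(s+t)=\tau+\tilde T_y(t)$ and the joint measurability of $\Phi$ that you flag as bookkeeping. Your route gives a self-contained argument with an explicit filtration $\mathcal{G}_s=\mathcal{F}_{T_x(s)}$ and kernel $P_t(y,B)=\PP[X_t(y)\in B]$. It also gives the bounds $t/\overline{\alpha}\le T_x(t)\le t/\underline{\alpha}$, which the paper only derives later (Lemma~\ref{lem:int_rep} and the proof of Theorem~\ref{thm:pert_inv}). The citation gives brevity and a stronger conclusion: the time-changed process inherits the regularity class of $L$ (a standard process, in particular strong Markov), which is the natural setting for the potential-theoretic result of Az\'ema et al.\ invoked in Theorem~\ref{thm:pert_inv}. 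As written, your argument yields only the simple Markov property. Running it at $(\mathcal{G}_s)$-stopping times instead of fixed $s$ would upgrade it to the strong Markov property.
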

\begin{proof}
  For the statement that the time-changed process is a Markov process
  with paths in $\mathbb{D}([0,\xi))$, where $\xi$ is the (possibly random) lifetime of the process,
  see, for example,
  \cite[(V.2.11)]{blumenthalMarkovProcessesPotential1968}.
  To finish the proof we show that the lifetime $\xi$ is infinite, which follows from
  $T_x(t)\to \infty$ for $t\to\infty$ and all $x\in\RR$.
  Since $T_x=A_x^{-1}$, this is equivalent to
  $A_x(t)\to \infty$ for $t\to\infty$ and all $x\in\RR$.
  But this follows immediately from the lower bound on $\alpha$ in Assumption~\ref{asu:alpha} by taking the limit $t\to \infty$ in
  \begin{align*}
    0
    \ < \
    t\cdot \underline{\alpha}
    \ \le \
    \int_0^t
    \alpha(L_r + x)
    \,\mathrm{d}r
    \ = \
    A_x(t)
    \,.
  \end{align*}
\end{proof}
\begin{remark}[Random time change by continuous additive functionals]
  The process $(X_t)_{t\ge 0}$ in Lemma~\ref{lem:existence_tcp} is an example of random time change by a
  continuous additive functional (CAF).
  For background on potential theory of
  CAFs,
  see
\cite[Chapter~IV.3]{blumenthalMarkovProcessesPotential1968}.
\end{remark}

  Let $\lambda^+$ be the Lebesgue measure on $\mathcal{B}([0,\infty))$, and denote the push-forward measure of $\lambda^+$ by $A_x$ as $\lambda^+\circ A_x^{-1}$.
  When precision of notation is not needed, we will use the equivalent notation $\mathrm{d}r$ instead of $\lambda^{+}(\mathrm{d}r)$.
\begin{lemma}[Integral representation of random clock]
  \label{lem:int_rep}
  Let $(L_t)_{t\ge 0}$ be a L\'evy process, let $\alpha$ be a mass function satisfying Assumption~\ref{asu:alpha},
  let $A_x$ and $T_x$ be the (inverse) random clock as in Definition~\ref{def:random_clock},
  and let $(X_t)_{t\ge 0}$ be the time-changed process in the sense of Lemma~\ref{lem:existence_tcp}.
  Then $\lambda^+$ is absolutely continuous with respect to $\lambda^+\circ A_x^{-1}$ with density $r\mapsto \alpha(X_r(x))$ and
  \begin{align*}
    T_x(t)
    \ = \
    \int_0^t
    \frac{1}{\alpha(X_r(x))}
    \,\mathrm{d}r
    \,.
  \end{align*}
\end{lemma}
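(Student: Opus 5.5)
The argument works pathwise: fix $x\in\RR$ and an $\omega$ for which $r\mapsto L_r(\omega)$ is c\`adl\`ag. Then $s\mapsto \alpha(L_s+x)$ is Borel measurable, since $\alpha$ is continuous by Assumption~\ref{asu:alpha}, and it takes values in $[\underline{\alpha},\overline{\alpha}]$. Hence $A_x$ is absolutely continuous with derivative $A_x'(s)=\alpha(L_s+x)$ for Lebesgue-a.e.\ $s$. Because $A_x$ is strictly increasing and continuous, and $A_x(t)\to\infty$ by the lower bound $A_x(t)\ge t\underline{\alpha}$ from the proof of Lemma~\ref{lem:existence_tcp}, it is a homeomorphism of $[0,\infty)$ with inverse $T_x$. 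The key identity is $\alpha(X_r(x))=\alpha(L_{T_x(r)}+x)$, so that the integrand $\alpha(X_r(x))$ is just $\alpha(L_\cdot+x)$ evaluated along the clock $T_x$.

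The first step identifies the push-forward measure. For $0\le a<b$, $\lambda^+\circ A_x^{-1}([a,b))=\lambda^+(\{s: a\le A_x(s)<b\})=T_x(b)-T_x(a)$. So $\lambda^+\circ A_x^{-1}$ is the Lebesgue--Stieltjes measure $\mathrm{d}T_x$. The second step is the change-of-variables (substitution) formula for the absolutely continuous increasing map $A_x$: for every nonnegative Borel $f$,
\begin{align*}
\int_0^\infty f(A_x(s))\,\alpha(L_s+x)\,\mathrm{d}s \ = \ \int_0^\infty f(u)\,\mathrm{d}u .
\end{align*}
It suffices to check this for indicators $f=\ind_{[a,b)}$, where it reads $A_x(T_x(b))-A_x(T_x(a))=b-a$. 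It then extends to all nonnegative Borel $f$ by a monotone class argument. Now take $f(u)=g(u)$ and note that $\alpha(L_s+x)=\alpha(X_{A_x(s)}(x))$. Rewriting the left-hand side as an integral against the push-forward gives
\begin{align*}
\int g(r)\,\alpha(X_r(x))\,\lambda^+\circ A_x^{-1}(\mathrm{d}r)\ = \ \int g(r)\,\mathrm{d}r .
\end{align*}
This is precisely the statement that $\lambda^+$ is absolutely continuous with respect to $\lambda^+\circ A_x^{-1}$ with density $r\mapsto\alpha(X_r(x))$.

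For the formula for $T_x(t)$, apply the identity just obtained with $g(r)=\ind_{[0,t)}(r)/\alpha(X_r(x))$. This $g$ is nonnegative and Borel, since $r\mapsto X_r(x)$ is c\`adl\`ag for fixed $\omega$, and it is bounded by $1/\underline{\alpha}$. The density and the reciprocal cancel, yielding
\begin{align*}
\int_0^t \frac{1}{\alpha(X_r(x))}\,\mathrm{d}r \ = \ \lambda^+\circ A_x^{-1}([0,t)) \ = \ \lambda^+(\{s: A_x(s)<t\}) \ = \ T_x(t).
\end{align*}
The last equality uses that $A_x$ is strictly increasing, continuous and starts at $A_x(0)=0$.

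The main point requiring care is the substitution step. I would justify it via the fundamental theorem of calculus for Lebesgue integrals, applied to the bounded measurable integrand $\alpha(L_\cdot+x)$. I would avoid differentiating $T_x$ directly: the route through indicators of intervals and the monotone class theorem sidesteps any issues at the jump times of $L$, which form only a countable, hence Lebesgue-null, set.
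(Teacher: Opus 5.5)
Your proposal is correct and follows essentially the same route as the paper. Both arguments rest on the identity $\int_0^{T_x(t)}\alpha(L_r+x)\,\mathrm{d}r = A_x(T_x(t)) = t$, transported through the push-forward $\lambda^+\circ A_x^{-1}$, and then invert the strictly positive density $\alpha(X_r(x))$ to recover $T_x(t)$. The only difference is that you spell out the monotone-class (uniqueness-of-measures) step extending agreement on intervals to all Borel sets, which the paper leaves implicit in its ``Therefore''.
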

\begin{proof}
  By the definition of the push-forward measure, the monotonicity of $A_x$, and $A_x^{-1}=T_x$, it holds for all $t\ge 0$
\begin{align*}
  \lambda^+([0,t])
  &
  \ = \
  t
  \ = \
  A_x(T_x(t))
  \ = \
  \int_{[0,T_x(t)]}
  \alpha(L_r + x)
  \,
  \lambda^+(\mathrm{d}r)
  \\&
  \ = \
  \int_{[0,t]}
  \alpha(L_{T_x(r)}+x)
  \,
  (\lambda^+ \circ A_x^{-1})(\mathrm{d}r)
  \\&
  \ = \
  \int_{[0,t]}
  \alpha(X_r(x))
  \,
  (\lambda^+ \circ A_x^{-1})(\mathrm{d}r)
  \,.
\end{align*}
  Therefore $\lambda^+$ is absolutely continuous with respect to $\lambda^+ \circ A_x^{-1}$ with density $r\mapsto \alpha(X_r(x))$.
It follows that
\begin{align*}
T_x(t)
  &
  \ = \
  \lambda^+[[0,T_{x}(t)]]
  \ = \
  \left(
  \lambda^+ \circ A_x^{-1}
  \right)[[0,t]]
  \ = \
  \int_{[0,t]}
  \frac{1}{\alpha(X_r(x))}
  \,\mathrm{d}r
  \,.
\end{align*}

\end{proof}
\begin{assumption}[Assumption on L\'evy process]
  \label{asu:levy}
  The process $(L_t)_{t\ge 0}$ is a L\'evy process satisfying $\EE[\exp(L_1)]=1$.
\end{assumption}
\begin{remark}[Assumption~\ref{asu:levy}]
  This assumption leads to a L\'evy--Brown--Resnick process,
  as defined in the introduction.
  In the sequel we will frequently use that, for L\'evy processes, $\EE[\exp(L_1)]=1$ implies
  $\EE[\exp(L_t)]=1$ for all $t\ge 0$. It is easy to see that the process--measure pair $((L_t)_{t\ge 0}, \exp(-x)\mathrm{d}x)$ is invariant. Indeed,
  \begin{align*}
    &
    \int_{\RR}
    \PP[L_t + x \in B]
    \exp(-x)
    \,\mathrm{d}x
    \\&
    \ = \
    \EE
    \left[
    \int_{\RR}
    \ind\{L_t + x\in B\}
    \exp(-x)
    \,\mathrm{d}x
    \right]
    \\&
    \ = \
    \EE
    \left[
    \int_{\RR}
    \ind\{ x\in B\}
    \exp(-(x-L_t))
    \,\mathrm{d}x
    \right]
    \ = \
    \EE
    \left[
    \exp(L_t)
    \right]
    \cdot
    \int_B
    \exp(-x)
    \,\mathrm{d}x
    \ = \
    \int_B
    \exp(-x)
    \,\mathrm{d}x
    \,.
  \end{align*}
\end{remark}
\begin{theorem}[Perturbed invariant process--measure pair]
  \label{thm:pert_inv}
  Let $\alpha$ be a mass function satisfying Assumption~\ref{asu:alpha},
  let $(L_t)_{t\ge 0}$ be a L\'evy process satisfying Assumption~\ref{asu:levy},
  and let $(X_t)_{t\ge 0}$ be the time-changed process in the sense of Lemma~\ref{lem:existence_tcp}. Then the measure $\alpha(x)\exp(-x)\mathrm{d}x$ is invariant for $(X_t)_{t\ge 0}$.
\end{theorem}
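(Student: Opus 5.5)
Fix a nonnegative measurable $f$ and set $\nu(\mathrm{d}x)=\alpha(x)\exp(-x)\,\mathrm{d}x$. The goal is to show
\begin{align*}
  \int_{\RR}\EE\left[f(X_t(x))\right]\nu(\mathrm{d}x)\ = \ \int_{\RR} f(x)\,\nu(\mathrm{d}x)
  \qquad\text{for all } t\ge 0\,,
\end{align*}
which is \eqref{eq:inv} with $f=\ind_B$. The plan is to avoid semigroups and generators and instead integrate against time. We show invariance of $\nu$ in an averaged (potential) sense for every $t$ via Fubini and the invariance of $\exp(-x)\,\mathrm{d}x$ for $(L_t)_{t\ge0}$. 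Then we recover invariance at each fixed $t$ by differentiating in $t$ or by exploiting the semigroup property.

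\textbf{Step 1 (occupation identity).} By Lemma~\ref{lem:int_rep}, or directly via the substitution $r=A_x(s)$ with $\mathrm{d}r=\alpha(L_s+x)\,\mathrm{d}s$,
\begin{align*}
  \int_0^t f(X_r(x))\,\mathrm{d}r \ = \ \int_0^{T_x(t)} f(L_s+x)\,\alpha(L_s+x)\,\mathrm{d}s\,.
\end{align*}
Integrate this against $\nu$ and, for now, replace the upper limit by a deterministic $S$; this is the idea behind the Palm/CAF identity. Using the invariance computed in the remark after Assumption~\ref{asu:levy}, we get $\int \EE[\int_0^S h(L_s+x)\,\mathrm{d}s]\exp(-x)\,\mathrm{d}x=S\int h(x)\exp(-x)\,\mathrm{d}x$. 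This shows that $\exp(-x)\,\mathrm{d}x$ is a stationary measure for the CAF $A$ with Revuz measure $\alpha(x)\exp(-x)\,\mathrm{d}x=\nu$. The classical result (Blumenthal--Getoor, Chapter V, or Revuz theory) states that time change by a CAF with full support turns an invariant measure $m$ into $\alpha\, m$, where $\alpha$ is the density of the Revuz measure. That result gives the theorem immediately, and I would cite it if it is acceptable.

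\textbf{Step 2 (self-contained route).} If citation is not acceptable, I would argue via the discrete skeleton or a direct Fubini argument with the random upper limit. Reparametrize in the natural time of $L$ and consider the space--time measure on paths. The key computation is a Campbell/Mecke-type identity: the map $x\mapsto$ shift of the path $L_\cdot+x$ by time $s$ preserves $\PP\otimes\exp(-x)\,\mathrm{d}x$. This holds by the Markov property of $L$ and invariance, using the stationarity of the particle system $\mathfrak{P}((L_t),\exp(-x)\mathrm{d}x)$ from Theorem~\ref{thm:brown}. One considers the particle system of $L$ and, for each particle, the time-changed path. Each $X^{(i)}$ is a deterministic measurable functional $g$ of the path $L^{(i)}+U^{(i)}$. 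We then compute the intensity of the point process of values at the fixed time $t$ using a stationarity/ergodic averaging over the starting time $h$: averaging $\frac1H\int_0^H\cdot\,\mathrm{d}h$ and letting $H\to\infty$ converts $\exp(-x)\,\mathrm{d}x$-weighting to $\alpha\exp(-x)\,\mathrm{d}x$-weighting, since time spent near $y$ in clock $t$ equals $\alpha(y)$ times time in clock $s$.

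\textbf{Main obstacle.} The main obstacle is passing from the averaged identity to the pointwise-in-$t$ identity. This is where the infinite-mass, transient structure matters: there is no ergodic normalization, and the upper limit $T_x(t)$ is random and depends on the path. I would first try the resolvent route. Show $\int U^\lambda_X f\,\mathrm{d}\nu=\lambda^{-1}\int f\,\mathrm{d}\nu$ for all $\lambda>0$, where $U^\lambda_X f(x)=\EE\int_0^\infty e^{-\lambda r}f(X_r(x))\,\mathrm{d}r$, using the substitution of Step 1 together with the CAF-resolvent formula for $L$. Then conclude by uniqueness of Laplace transforms, noting that $t\mapsto\int P^X_t f\,\mathrm{d}\nu$ is right-continuous for bounded continuous $f$ with compact support, and extend to Borel sets by a monotone class argument.
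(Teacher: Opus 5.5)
\textbf{There is a genuine gap: the proposal sets up the right framework but omits the computation that proves the theorem.}

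Your final ``resolvent route'' is the paper's skeleton. The paper checks the $1$-resolvent identity and quotes Az\'ema--Duflo--Revuz (Prop.~2.1) for its equivalence with invariance. This also spares you the Laplace-inversion step, where, because $\nu$ is infinite, you would still need finiteness and a domination argument for right-continuity of $t\mapsto\int P^X_t f\,\mathrm{d}\nu$. Your Step~1 substitution $r=A_x(s)$ is exactly the paper's first move and gives
\[
\int_{\RR}U^\lambda_X f\,\mathrm{d}\nu
\ = \
\int_{\RR}\alpha(x)e^{-x}\,\EE\Big[\int_0^\infty e^{-\lambda A_x(s)}\,f(L_s+x)\,\alpha(L_s+x)\,\mathrm{d}s\Big]\,\mathrm{d}x\,.
\]
The proposal stops exactly where the theorem's content begins. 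You never show that this equals $\lambda^{-1}\int f\,\mathrm{d}\nu$, and the ``CAF-resolvent formula for $L$'' you invoke is not an identity that does so.

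The discount $e^{-\lambda A_x(s)}$ depends on the whole path on $[0,s]$. So the fixed-time invariance of $\exp(-x)\,\mathrm{d}x$, which is all your deterministic-$S$ computation uses, cannot be applied under this integral. Step~2 remains heuristic: ``time near $y$ in the new clock is $\alpha(y)$ times time in the old clock'' is the Revuz-measure heuristic. Turning it into invariance at a fixed $t$ would need an averaging or limit theorem, which, as you note yourself, is unavailable in this infinite-measure, transient setting. Falling back on a general ``time change multiplies the invariant measure by the Revuz density'' theorem, with no precise statement or hypotheses, amounts to assuming the claim. Such results are usually formulated for recurrent or symmetric processes. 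Neither applies here: $L$ is transient and in general not $\exp(-x)\,\mathrm{d}x$-symmetric.

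The missing idea is a time-reversal plus exponential-tilt argument. After Fubini, substitute $x=y-L_s$. This produces the factor $e^{L_s}\alpha(y-L_s)$ and turns $A_x(s)$ into $\int_0^s\alpha\big(y-(L_s-L_{s-r})\big)\,\mathrm{d}r$. The reversed increments $(L_s-L_{s-r})_{r\in[0,s]}$ have the same law as $(L_r)_{r\in[0,s]}$, so the claim reduces to
\[
\EE\Big[\int_0^\infty \lambda\, e^{L_s}\,\alpha(y-L_s)\,e^{-\lambda \widehat A_y(s)}\,\mathrm{d}s\Big]\ = \ 1
\qquad\text{for all } y\in\RR\,,\qquad
\widehat A_y(s)\ =\ \int_0^s\alpha(y-L_r)\,\mathrm{d}r\,.
\]
To prove this:
\begin{enumerate}
  \item Substitute $u=\widehat A_y(s)$. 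The inner integral becomes $\int_0^\infty\lambda e^{-\lambda u}e^{L_{\widehat T_y(u)}}\,\mathrm{d}u$, where $\widehat T_y=\widehat A_y^{-1}$.
  \item Apply optional stopping to the martingale $(e^{L_t})_{t\ge 0}$ at the bounded stopping time $\widehat T_y(u)\le u/\underline{\alpha}$. This gives $\EE[e^{L_{\widehat T_y(u)}}]=1$, and the identity follows.
\end{enumerate}
This is how the paper proves the theorem, with $\lambda=1$. Here $\EE[\exp(L_1)]=1$ is used through optional stopping at a path-dependent time, not merely through invariance at fixed times; equivalently, the dual of $L$ with respect to $\exp(-x)\,\mathrm{d}x$ (namely $-L$ under the Esscher tilt) is conservative. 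Without this step, or an equivalent duality argument, your proposal does not establish the theorem.
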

\begin{proof}
  Since $(X_{t})_{t\ge 0}$ is a Markov process by Lemma~\ref{lem:existence_tcp}, it follows from~\cite[Proposition~2.1]{azemaMesureInvarianteProcessus1969} that
  the invariance of the measure $\alpha(x)\exp(-x)\,\mathrm{d}x$ for $(X_{t})_{t\ge 0}$ is equivalent to
\begin{align*}
  \int_{\RR}
  \left(
  \int_0^{\infty}
  \exp(-t)
  \cdot
  \PP[X_{t}(x)\in B]
  \,\mathrm{d}t
  \right)
  \alpha(x)
  \exp(-x)
  \,\mathrm{d}x
  \ = \
  \int_{B}
  \alpha(x)
  \exp(-x)
  \,\mathrm{d}x
  \,,
\end{align*}
  for all $B\in \mathcal{B}(\RR)$.
  To show this, note that $X_t(x)=L_{T_x(t)}+x$, and that
\begin{align*}
  &
  \int_0^{\infty}
  \exp(-t)
  \ind\{L_{T_x(t)}+x\in B\}
  \,
  \lambda^+(\mathrm{d}t)
  \\&
  \ = \
  \int_0^{\infty}
  \exp(-t)
  \ind\{L_{T_x(t)}+x\in B\}
  \alpha(L_{T_x(t)}+x)
  \,
  (\lambda^+ \circ A_{x}^{-1})(\mathrm{d}t)
  \\&
  \ = \
  \int_0^{\infty}
  \exp(-A_x(t))
  \ind\{L_{t}+x\in B\}
  \alpha(L_{t}+x)
  \,
  \lambda^+(\mathrm{d}t)
  \,,
\end{align*}
  where for the first equality we use that $\lambda^+$ is absolutely continuous with respect to $\lambda^+\circ A^{-1}_x$, with density $r\mapsto \alpha(X_r(x))=\alpha(L_{T_x(r)}+x)$ (see
  Lemma~\ref{lem:int_rep}),
  and for the second equality we use
  the properties of push-forward measures and $A_x(0)=0$ and $\lim_{t\to\infty}A_x(t)=\infty$.
Therefore, by Fubini-Tonelli and change of variables, it holds
\begin{align*}
  &
  \int_{\RR}
  \exp(-x)
  \alpha(x)
  \left(
  \int_0^{\infty}
  \exp(-t)
  \cdot
  \PP[X_t(x)\in B]
  \,\mathrm{d}t
  \right)
  \,\mathrm{d}x
  \\&
  \ = \
  \EE
  \left[
  \int_{\RR}
  \exp(-x)
  \alpha(x)
  \left(
  \int_0^{\infty}
  \exp(-t)
  \ind\{L_{T_x(t)}+x\in B\}
  \,\mathrm{d}t
  \right)
  \,\mathrm{d}x
  \right]
  \\&
  \ = \
  \EE
  \left[
  \int_{\RR}
  \exp(-x)
  \alpha(x)
  \left(
  \int_0^{\infty}
  \exp(-A_{x}(t))
  \ind\{L_{t}+x\in B\}
  \alpha(L_t+x)
  \,\mathrm{d}t
  \right)
  \,\mathrm{d}x
  \right]
  \\&
  \ = \
  \EE
  \left[
  \int_0^{\infty}
  \left(
  \int_{\RR}
  \exp(-x)
  \alpha(x)
  \exp
  \left(
  -\int_0^t
  \alpha(L_{t-s}+x)
  \,\mathrm{d}s
  \right)
  \ind\{L_{t}+x\in B\}
  \alpha(L_t+x)
  \,\mathrm{d}x
  \right)
  \,\mathrm{d}t
  \right]
  \\&
  \ = \
  \EE
  \left[
  \int_0^{\infty}
  \left(
  \int_{\RR}
  \exp(-(x-L_t))
  \alpha(x-L_t)
  \exp
  \left(
  -\int_0^t
  \alpha(x-(L_t - L_{t-s}))
  \,\mathrm{d}s
  \right)
  \ind\{x\in B\}
  \alpha(x)
  \,\mathrm{d}x
  \right)
  \,\mathrm{d}t
  \right]
  \\&
  \ = \
  \int_{B}
  \alpha(x)
  \exp(-x)
  \left(
  \int_0^{\infty}
  \EE
  \left[
  \exp(L_t)
  \alpha(x-L_t)
  \exp
  \left(
  -\int_0^t
  \alpha(x-(L_t - L_{t-s}))
  \,\mathrm{d}s
  \right)
  \right]
  \,\mathrm{d}t
  \right)
  \,\mathrm{d}x
  \,,
\end{align*}
where for the third equality we use that
$
\int_0^t\alpha(L_{r}+x)\,\mathrm{d}r
\ = \
\int_0^t\alpha(L_{t-s}+x)\,\mathrm{d}s
$, and for the fourth equality we make the change of variables $x\mapsto L_t+x$.
For $t\ge 0$,
define the process $(Y^t_s)_{s\in[0,t]}$ by $Y^t_s=L_t-L_{t-s}$.
Then $(Y^t_s)_{s\in[0,t]}$ has the same finite-dimensional distributions as $(L_s)_{s\in[0,t]}$.
Note that $L_t=L_t-L_{t-t}=Y^t_t$, so that
\begin{align*}
  &
  \int_0^{\infty}
  \EE
  \left[
  \exp(L_t)
  \alpha(x-L_t)
  \exp
  \left(
  -\int_0^t
  \alpha(x-(L_t - L_{t-s}))
  \,\mathrm{d}s
  \right)
  \right]
  \,\mathrm{d}t
  \\&
  \ = \
  \int_0^{\infty}
  \EE
  \left[
  \exp(Y^t_t)
  \alpha(x-Y^t_t)
  \exp
  \left(
  -\int_0^t
  \alpha(x-(Y^t_s))
  \,\mathrm{d}s
  \right)
  \right]
  \,\mathrm{d}t
  \\&
  \ = \
  \int_0^{\infty}
  \EE
  \left[
  \exp(L_t)
  \alpha(x-L_t)
  \exp
  \left(
  -\int_0^t
  \alpha(x-L_s)
  \,\mathrm{d}s
  \right)
  \right]
  \,\mathrm{d}t
  \,.
 \end{align*}
 Now, let $\widehat{A}_x$ and $\widehat{T}_x$ as in Definition~\ref{def:random_clock}, with the same mass function $\alpha$ as before, but with L\'evy process $(-L_t)_{t\ge 0}$, that is,
 \begin{align*}
   \widehat{A}_{x}(t)
   \ = \
   \int_0^{t}
   \alpha(x - L_r)
   \,\mathrm{d}r
   \qquad\text{and}\qquad
   \widehat{T}_x(t)
   \ = \
   \left(
   \widehat{A}_{x}
   \right)^{-1}
   (t)
   \,.
 \end{align*}
 It is easy to see that $\widehat{A}_x(0)=0$ and $\lim_{t\to\infty}\widehat{A}_x(t)=\infty$, so that by the properties of push-forward measures and Lemma~\ref{lem:int_rep} it follows that
 \begin{align*}
   &
  \int_0^{\infty}
  \exp(L_t)
  \alpha(x-L_t)
  \exp
  \left(
  -
   \widehat{A}_x(t)
  \right)
   \,
   \lambda^+(\mathrm{d}t)
   \\&
   \ = \
  \int_0^{\infty}
   \exp
   \left(
   L_{\widehat{T}_x(t)}
   \right)
   \alpha
   \left(
   x-L_{\widehat{T}_x(t)}
   \right)
  \exp
  \left(
  -t
  \right)
  \,
   \left(
   \lambda^+ \circ
   \left(
   \widehat{A}_x
   \right)^{-1}
   \right)
   (\mathrm{d}t)
   \\&
   \ = \
  \int_0^{\infty}
   \exp
   \left(
   L_{\widehat{T}_x(t)}
   \right)
  \exp
  \left(
  -
  t
  \right)
  \,
   \lambda^+(\mathrm{d}t)
  \,.
 \end{align*}
 Next, we apply optional stopping to $\exp(L_{\widehat{T}_x(t)})$, whence it follows
 \begin{align*}
   \EE[\exp(L_{\widehat{T}_x(t)})]=\EE[\exp(L_0)]=1
   \,,
 \end{align*}
 and therefore
 \begin{align*}
  \int_0^{\infty}
  \EE
  \left[
   \exp(L_{\widehat{T}_x(t)})
  \right]
  \exp
  \left(
  -
  t
  \right)
  \,\mathrm{d}t
   \ = \
   \int_0^{\infty}
   \exp(-t)
   \,\mathrm{d}t
   \ = \ 1
   \,,
 \end{align*}
 which finishes the proof.
 To this end, we have to show that $\widehat{T}_x(t)$ is a bounded stopping time and that $(\exp(L_t))_{t\ge 0}$ is a martingale.
 Note that $\widehat{T}_{x}(t)$, by Lemma~\ref{lem:int_rep}, is a stopping time with
 \begin{align*}
   \widehat{T}_{x}(t)
   \ = \
   \int_0^t
   \frac{1}{\alpha(x - L_{\widehat{T}_{x}(r)})}
   \,\mathrm{d}r
   \ \le \
   t
   \cdot \frac{1}{\underline{\alpha}}
   \,,
 \end{align*}
 and
 that for $0\le s\le t$ it holds that
 \begin{align*}
   \EE[\exp(L_t)\mid L_s]
   \ = \
   \EE[\exp(L_t-L_s)\exp(L_s)\mid L_s]
   \ = \
   \EE[\exp(L_{t-s})] \exp(L_s)
   \ = \
   \exp(L_s)
   \,,
 \end{align*}
 where the second equality follows from the L\'evy property of $(L_t)_{t\ge 0}$ and the properties of conditional expectation, and the last equality follows from $\EE[\exp(L_{t-s})]=1$ for all $0\le s \le t$.
 Therefore $(\exp(L_t))_{t\ge 0}$ is a martingale, and we can apply optional stopping to conclude the proof.
\end{proof}
\begin{remark}
  Another possibility of perturbing an invariant process--measure pair $((L_t)_{t\ge 0}, \exp(-x)\mathrm{d}x)$ of a L\'evy--Brown--Resnick process is via the theory of
  Feller processes;
  see \cite[Chapter~4]{bottcherLevyMattersIII2013}
for an overview of transformations of Feller processes.
  It is beyond the scope of this work to explore this direction.
\end{remark}
\section{Stationary max-id processes}
\label{sec:max-id}
In the special case where the intensity measure $\mu$ of an invariant process--measure pair $((X_t)_{t\ge 0},\mu)$ has finite mass on sets bounded away from $-\infty$, the maximum over the corresponding particle system
$\mathfrak{P}_t((X_t)_{t\ge 0},\mu)$
is almost surely finite at all times $t \ge 0$. We call processes obtained in this way max-infinitely divisible (max-id), in the sense that they are componentwise maxima over PPP with points in $\mathbb{D}([0,\infty))$.
More precisely, let
$\alpha$ be a mass function satisfying Assumption~\ref{asu:alpha}, let
$(L_t)_{t\ge 0}$ be a L\'evy process
satisfying Assumption~\ref{asu:levy},
let $(X_t)_{t\ge 0}$ be the time-changed process in the sense of Lemma~\ref{lem:existence_tcp}, and consider
\begin{align*}
  Z_t
  \ = \
  \max
  \mathfrak{P}_{t}((X_{t})_{t\ge 0}, \alpha(x)\exp(-x)\mathrm{d}x)
  \ = \
  \max_{i\in\NN}
  X_t^{(i)}(U^{(i)})
  \,,
\end{align*}
where $\{U^{(i)}\}_{i\in\NN}$ is a PPP on the real line with intensity measure $\alpha(x)\exp(-x)\,\mathrm{d}x$, and $(X_t^{(i)})_{t\ge 0}$ are i.i.d.~copies of $(X_t)_{t\ge 0}$.
\begin{remark}[PPP from jump times of Poisson process]
  \label{rem:order}
  Such a PPP $\{U^{(i)}\}_{i\in\NN}$ can be obtained by a monotone transformation of the jump times of a unit Poisson process on the real line.
  To be more precise,
  \begin{align*}
    U^{(i)}
    \ = \
    F^{\leftarrow}
    (\exp(-\Gamma_i))
    \,,
    \qquad
    (\Gamma_i)_{i\in\NN}
    \ \text{jump times of unit Poisson process}
    \,,
  \end{align*}
  where $F^{\leftarrow}$ is the generalized inverse of $F$, that is $F^{\leftarrow}(q)=\inf\{x\in\RR\mid F(x)\ge q\}$.
  Therefore, the natural ordering of the points is preserved, where in our case, we can write without loss of generality $U^{(1)}=\max\{U^{(i)}\}_{i\in\NN}$, so that
  \begin{align*}
  \PP[U^{(1)}\le x]
    &
    \ = \
    \PP[F^{\leftarrow}(\exp(-\Gamma_1))\le x]
    \ = \
    \PP[
    \Gamma_1
    \ge -\log(F(x))
    ]
    \ = \
    \exp(-(-\log(F(x))))
    \\&
    \ = \
    F(x)\,.
  \end{align*}
\end{remark}
After a technical lemma on exceedance probabilities for random time-changed processes,
the main result of this section
(Theorem~\ref{thm:cad})
shows that the process $(Z_t)_{t\ge 0}$ has paths in $\mathbb{D}([0,\infty))$ and is stationary.
\begin{lemma}[Bound on exceedance probability]
  \label{lem:dynkin}
  Let $\alpha$ be a mass function satisfying Assumption~\ref{asu:alpha}, let $(L_t)_{t\ge 0}$ be a L\'evy process satisfying Assumption~\ref{asu:levy}, and let $(X_t)_{t\ge 0}$
  be the time-changed process in the sense of Lemma~\ref{lem:existence_tcp}.
  Then, for all $t\ge 0$, there exists $v>0$ and $C_{v,t}>0$ such that
  for all $u,x\in\RR$ satisfying $u-x > v$ it holds that
  \begin{align*}
    \PP
    \left[
    \sup_{s\in[0,t]}
    X_s(x)
    \ \ge \ u
    \right]
    \ \le \
    C_{v,t}
    \cdot
    \PP
    \left[
    L_{t\cdot \overline{\alpha}}
    \ge
    u - x - v
    \right]
    \,.
  \end{align*}
  \end{lemma}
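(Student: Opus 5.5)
Since $X_s(x)=L_{T_x(s)}+x$ and, by Lemma~\ref{lem:int_rep}, $T_x(s)\le s/\underline{\alpha}$, the time-changed path on $[0,t]$ is a reparametrisation of the L\'evy path on $[0,T_x(t)]$. Hence
\begin{align*}
  \sup_{s\in[0,t]}X_s(x)
  \ = \
  x+\sup_{r\in[0,T_x(t)]}L_r
  \ \le \
  x+\sup_{r\in[0,t/\underline{\alpha}]}L_r
  \,.
\end{align*}
So it suffices to bound $\PP[\sup_{r\le \tau}L_r\ge y]$ with $\tau=t/\underline{\alpha}$ and $y=u-x$. The statement, however, has $L_{t\overline{\alpha}}$ on the right-hand side. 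I would therefore split the proof into two steps: a maximal inequality for the L\'evy process on a fixed horizon, followed by a comparison between $L$ at horizon $\tau$ and $L$ at horizon $t\overline{\alpha}$.

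For the maximal inequality I use the classical Ottaviani/Etemadi-type (``reflection'') argument for processes with independent stationary increments. Let $\sigma=\inf\{r\ge 0: L_r\ge y\}$, which is a stopping time. On $\{\sigma\le \tau\}$, right-continuity gives $L_\sigma\ge y$. By the strong Markov property,
\begin{align*}
  \PP[L_\tau\ge y-v]
  \ \ge \
  \PP[\sigma\le \tau]\cdot \inf_{r\in[0,\tau]}\PP[L_r\ge -v]
  \,.
\end{align*}
Since $L$ has càdlàg paths, $\inf_{r\le\tau}L_r>-\infty$ almost surely. Choosing $v$ large therefore makes $p_v:=\inf_{r\le\tau}\PP[L_r\ge -v]\ge \PP[\inf_{r\le\tau}L_r\ge -v]\ge 1/2$. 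This yields
\begin{align*}
  \PP[\sup_{r\le \tau}L_r\ge y]\ \le\ 2\,\PP[L_\tau\ge y-v]
  \qquad\text{for } y>v\,,
\end{align*}
and $v$ depends only on $t$, as the statement allows.

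It remains to pass from the horizon $\tau=t/\underline{\alpha}$ to the horizon $t\overline{\alpha}$. I expect this to be the main obstacle. When $\tau\le t\overline{\alpha}$ it is easy: write $L_{t\overline{\alpha}}=L_\tau+(L_{t\overline{\alpha}}-L_\tau)$ with an independent increment, and obtain
\begin{align*}
  \PP[L_{t\overline{\alpha}}\ge y-2v]
  \ \ge \
  \PP[L_\tau\ge y-v]\cdot\PP[L_{t\overline{\alpha}-\tau}\ge -v]
  \,.
\end{align*}
Enlarging $v$ bounds the last factor below by $1/2$, and renaming $2v$ as $v$ gives the claim with $C_{v,t}=4$. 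The condition $\tau\le t\overline{\alpha}$ means $\underline{\alpha}\,\overline{\alpha}\ge 1$, which is not guaranteed in general, although Assumption~\ref{asu:alpha} forces $\overline{\alpha}\ge 1$. For the general case I would avoid the crude bound $T_x(t)\le t/\underline{\alpha}$, or else absorb the gap using only $\PP[\sup_{r\le\tau}L_r\ge y]\le \PP[\sup_{r\le t\overline{\alpha}}L_r\ge y]$ whenever the correct clock bound holds. If this fails I would check whether the intended bound is $T_x(t)\le t\overline{\alpha}$ for the convention used (for example, if $\alpha$ acts as a speed in the other direction). Failing that, I would state and prove the lemma with $L_{t/\underline{\alpha}}$, which is all that later uses require: a tail of order $\PP[L_{c t}\ge u-x-v]$, integrable against $\alpha(x)e^{-x}\,\mathrm{d}x$ thanks to $\EE[\exp(L_{ct})]=1$.
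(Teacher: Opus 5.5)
Your argument is sound as far as it goes, and the case you could not close cannot be closed. With the paper's definitions, the inequality with $L_{t\overline{\alpha}}$ is false in general when $\underline{\alpha}\,\overline{\alpha}<1$.

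Here is a counterexample. Take $L_r=B_r-r/2$ and a continuous $\alpha$ with $\alpha\equiv\underline{\alpha}<1$ on $(-\infty,0]$ and increasing to $1$ on $(0,\infty)$, so that $\overline{\alpha}=1$. Put $u=0$, $x=-y$ and $\tau:=\inf\{r\ge 0: L_r+x\ge 0\}$. Before $\tau$ the path $L_r+x$ stays in $(-\infty,0)$, so $A_x(\tau)=\underline{\alpha}\tau$. The first passage time of $X(x)$ above $0$ is $A_x(\tau)$, so for $t>0$
\begin{align*}
  \PP\Bigl[\sup_{s\in[0,t]}X_s(x)\ge 0\Bigr]
  \ = \
  \PP\Bigl[\sup_{r\in[0,t/\underline{\alpha}]}L_r\ge y\Bigr]
  \ \ge \
  \PP[L_{t/\underline{\alpha}}\ge y]\,.
\end{align*}
As $y\to\infty$, the logarithm of the right-hand side is $-\underline{\alpha}y^2/(2t)\,(1+o(1))$. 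By contrast, $\log\PP[L_{t}\ge y-v]=-y^2/(2t)\,(1+o(1))$, so no choice of $v$ and $C_{v,t}$ works.

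This also rules out your other escape routes, namely a sharper clock bound or a different reading of the convention. Since $T_x=A_x^{-1}$ with $A_x=\int_0^{\cdot}\alpha(L_r+x)\,\mathrm{d}r$, $\alpha$ acts as a slowness. In this example the event is exactly $\{\tau\le t/\underline{\alpha}\}$, so there is no slack to recover.

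The paper's proof reaches $t\overline{\alpha}$ only through a slip. It asserts $T_x(\tau(u,x))=\widetilde{\tau}(u,x)$, deduces $\{\widetilde{\tau}\le t\}=\{\tau\le A_x(t)\}$, and then uses $A_x(t)\le t\overline{\alpha}$. But $X$-time $s$ corresponds to $L$-time $T_x(s)$, so the correct identity is $\widetilde{\tau}(u,x)=A_x(\tau(u,x))$. This gives $\{\widetilde{\tau}\le t\}=\{\tau\le T_x(t)\}$ and the bound $T_x(t)\le t/\underline{\alpha}$, which is exactly your reparametrisation.

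So your fallback is the correct statement. With $t/\underline{\alpha}$ in place of $t\overline{\alpha}$, your proof is complete and has the same structure as the paper's: reduce to the L\'evy supremum, then apply the maximal inequality of Willekens that the paper cites. Your strong-Markov argument reproves that inequality, with constant $1/\inf_{r\le\tau}\PP[L_r\ge -v]\le 2$.

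As you say, the later uses survive, because they only need $\EE[\exp(L_s)]=1$ at whatever horizon appears. The proof of Theorem~\ref{thm:max_domain} in fact already works with the $L$-time window $[0,\overline{K}/\underline{\alpha}]$ in its Steps 2--3.

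One small repair. The inclusion $\{\sup_{r\le\tau}L_r\ge y\}\subseteq\{\sigma\le\tau\}$ can fail if the supremum equals $y$ and is only approached as a left limit just before a downward jump. Run the argument with $y-\epsilon$ in place of $y$ and absorb $\epsilon$ into $v$.
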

\begin{proof}
  Denote
\begin{align*}
  \tau(u,x)
  \ := \
  \inf
  \{
  t\ge 0 \mid L_{t} + x \ge u
  \}
  \qquad\text{and}\qquad
  \widetilde{\tau}(u,x)
  \ := \
  \inf
  \{
  t\ge 0 \mid L_{T_x(t)} + x \ge u
  \}
  \,,
\end{align*}
  that is, the first hitting times of the level $u\in\RR$ of $(L_t)_{t\ge 0}$ and $(X_t)_{t\ge 0}$ respectively.
  Clearly it holds that
\begin{align*}
  T_x(\tau(u,x))
  \ = \
  \widetilde{\tau}(u,x)
  \,.
\end{align*}
  Since $A_x(t)=(T_x)^{-1}(t)$ is monotone it holds that
\begin{align*}
 \widetilde{\tau}(u,x)\ \le\  t
 \qquad\text{is equivalent to}\qquad
  \tau(u,x)\ \le\  A_x(t)
  \,.
\end{align*}
  Then by Assumption~\ref{asu:alpha}
\begin{align}
  \label{eq:C0}
  A_x(t)
  \ = \
  \int_0^t \alpha(L_r+x)\,\mathrm{d}r
  \ \le \
  t
  \cdot
  \overline{\alpha}
  \,.
\end{align}
Now choose $v>0$ large enough, such that
  \begin{align}
    \label{eq:C1}
    0
    \ < \
\PP
    \left[
    \inf_{s\in[0,t\cdot\overline{\alpha}]}
    L_s
    \ \ge \ -v
    \right]
    \,.
  \end{align}
  It follows from \eqref{eq:C0} that
  \begin{align}
    \label{eq:C2}
    \begin{split}
    &
    \PP
    \left[
    \sup_{s\in[0,t]}
    X_s(x)
    \ \ge \ u
    \right]
    \\&
    \ = \
    \PP
    \left[
    \sup_{s\in[0,t]}
    L_{T_x(s)}+x
    \ \ge \ u
    \right]
    \ = \
    \PP
    [
    \widetilde{\tau}(u,x)
    \ \le \
    t
    ]
    \ = \
    \PP
    [
    \tau(u,x)
    \ \le \
    A_x(t)
    ]
    \\&
    \ \le \
    \PP
    [
    \tau(u,x)
    \ \le \
    t\cdot \overline{\alpha}
    ]
    \\&
    \ = \
\PP
    \left[
    \sup_{s\in[0,t\cdot \overline{\alpha}]}
    L_{s}+x
    \ \ge \ u
    \right]
       \,.
    \end{split}
  \end{align}
  By \cite[(2.1)]{willekensSupremumInfinitelyDivisible1987} it holds
  for $u,x\in \RR$ satisfying $u-x>v$ that
  \begin{align}
    \label{eq:C3}
    &
\PP
    \left[
    \sup_{s\in[0,t\cdot \overline{\alpha}]}
    L_{s}+x
    \ \ge \ u
    \right]
    \ \le \
    \frac{
\PP
    \left[
    L_{t\cdot \overline{\alpha}}
    \ \ge \ u - x - v
    \right]
    }{
\PP
    \left[
    \inf_{s\in[0,t\cdot \overline{\alpha}]}
    L_s
    \ \ge \ -v
    \right]
    }
    \ = \
    C_{v,t}
    \cdot
\PP
    \left[
    L_{t\cdot \overline{\alpha}}
    \ \ge \ u - x - v
    \right]
     \,,
  \end{align}
  where, by \eqref{eq:C1},
  \begin{align*}
    C_{v,t}
    \ := \
    \frac{1}{
\PP
    \left[
    \inf_{s\in[0,t\cdot \overline{\alpha}]}
    L_s
    \ \ge \ -v
    \right]
    }
    \ < \
    \infty
    \,.
  \end{align*}
  Putting together \eqref{eq:C2} and \eqref{eq:C3} finishes the proof.
\end{proof}
\begin{theorem}[Existence and stationarity of max-id processes]
  \label{thm:cad}
  Let $\alpha$ be a mass function satisfying Assumption~\ref{asu:alpha}, let $(L_t)_{t\ge 0}$ be a L\'evy process satisfying Assumption~\ref{asu:levy}, and let $(X_t)_{t\ge 0}$
  be the time-changed process in the sense of Lemma~\ref{lem:existence_tcp},
  and consider the process $(Z_t)_{t\ge 0}$ defined by
\begin{align*}
  Z_t
  \ = \
  \max
  \mathfrak{P}_t((X_t)_{t\ge 0}, \alpha(x)\exp(-x)\mathrm{d}x)
  \,.
\end{align*}
  Then the process $(Z_t)_{t\ge 0}$ has paths in $\mathbb{D}([0,\infty))$ and is stationary.
If, furthermore, $(L_t)_{t\ge 0}$ has continuous paths, then $(Z_t)_{t\ge 0}$ has paths in $\mathbb{C}([0,\infty))$.
\end{theorem}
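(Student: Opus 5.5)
Stationarity follows from what we already have. By Theorem~\ref{thm:pert_inv} the pair $((X_t)_{t\ge 0},\alpha(x)\exp(-x)\mathrm{d}x)$ is invariant. The measure $\mu(\mathrm{d}x)=\alpha(x)\exp(-x)\mathrm{d}x$ is $\sigma$-finite with $\mu(\RR)=\infty$, since $\alpha\ge\underline{\alpha}>0$. Theorem~\ref{thm:brown}(2) then makes the particle system stationary in the sense of \eqref{eq:stat}. For fixed times $t_1<\dots<t_\ell$, the vector $(Z_{t_1},\dots,Z_{t_\ell})$ is a measurable functional (the componentwise maximum) of the point process $\mathfrak{P}_{t_1,\ldots,t_\ell}$. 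Equality in distribution of the shifted point processes therefore transfers to the finite-dimensional distributions of $Z$, once we know the maxima are a.s.\ finite and attained.

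The main work is path regularity, and I would localize to a compact window $[0,t]$ and show that only finitely many particles matter there. Fix $t$ and take $v$ and $C_{v,t}$ from Lemma~\ref{lem:dynkin}. Fix a level $u$ and let $g$ map a path to $\sup_{s\in[0,t]}$ of that path. By Theorem~\ref{thm:brown}(1), the number $N_u$ of particles with $\sup_{s\in[0,t]}X_s^{(i)}(U^{(i)})\ge u$ is Poisson with mean
\begin{align*}
\int_{\RR}\PP\Big[\sup_{s\in[0,t]}X_s(x)\ge u\Big]\alpha(x)e^{-x}\,\mathrm{d}x .
\end{align*}
To bound this I split the integral at $x=u-v$. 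The part $x\ge u-v$ contributes at most $\overline{\alpha}e^{-(u-v)}$. On $x<u-v$, Lemma~\ref{lem:dynkin} bounds the integrand by $C_{v,t}\overline{\alpha}\,\PP[L_{t\overline{\alpha}}\ge u-x-v]e^{-x}$. Substituting $y=u-x-v$ and using Tonelli, this is at most $C_{v,t}\overline{\alpha}e^{-(u-v)}\EE[e^{L_{t\overline\alpha}}]=C_{v,t}\overline{\alpha}e^{-(u-v)}$ by Assumption~\ref{asu:levy}. Hence $\EE N_u<\infty$, so $N_u<\infty$ a.s.

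Next I need the maximum on $[0,t]$ to be attained by these finitely many particles rather than by points far below. For this I use the particle with the largest starting point, $U^{(1)}$ (Remark~\ref{rem:order}). Its path $X^{(1)}(U^{(1)})$ is c\`adl\`ag, so it is bounded below on $[0,t]$ by some random finite $m$. Letting $u\downarrow-\infty$ along integers, $\bigcap_u\{N_u=\infty\}$ is null for each $u$, so a.s.\ $N_m<\infty$ for a random level $m$ below the path of particle 1. To make this rigorous I would use the events $\{\inf_{[0,t]}X^{(1)}\ge -k\}$ and take a union over $k$. Consequently, on $[0,t]$, $Z_s=\max_{i\in J}X^{(i)}_s(U^{(i)})$ for a finite random index set $J$, and particle 1 keeps $Z_s>-\infty$.

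A finite maximum of c\`adl\`ag functions is c\`adl\`ag, and a finite maximum of continuous functions is continuous. If $L$ has continuous paths, then so does $X_t(x)=L_{T_x(t)}+x$, because $T_x$ is continuous. Letting $t\to\infty$ through the integers gives paths in $\mathbb{D}([0,\infty))$, respectively $\mathbb{C}([0,\infty))$. The same finiteness also justifies the finite-dimensional maxima used in the stationarity step. The main obstacle is the tail integral bound, which depends on Lemma~\ref{lem:dynkin} together with the exponential moment identity.
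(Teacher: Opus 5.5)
Your proposal is correct and follows essentially the paper's proof. You localize to $[0,t]$, bound the Poisson count of particles exceeding $u$ on $[0,t]$ via Lemma~\ref{lem:dynkin} and $\EE[\exp(L_{t\overline{\alpha}})]=1$, reduce $Z$ to a finite maximum, and deduce stationarity from Theorem~\ref{thm:pert_inv} and Theorem~\ref{thm:brown}(ii). Your use of the top particle $U^{(1)}$ to guarantee $\inf_{s\in[0,t]}Z_s>-\infty$, and that the maximum is attained within the finite index set, makes explicit a step the paper only asserts via $\PP[\bigcup_{u\in\mathbb{Z}}J_u]=1$.
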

\begin{proof}
  We adapt the proof of \cite[Proposition~1.14]{engelkeMaxstableProcessesStationary2015} to our setting. To this end,
  it is sufficient to prove that $(Z_s)_{s\in[0,t]}$ has paths in $\mathbb{D}([0,t])$ for all $t>0$. Fix $t>0$ and $u\in\RR$, and consider
\begin{align*}
  I_u
  \ := \
  \left\{
  i \in \NN
  \ \bigg| \
  \sup_{s\in [0,t]}
  X^{(i)}_{s}(U^{(i)})
  \ \ge \ u
  \right\}
  \,.
\end{align*}
  This set contains the indices of particles that somewhere in the compact interval $[0,t]$ have values above the threshold $u$ and
  hence are the only particles that, in case that $\inf_{s\in[0,t]} Z_s\ge u$, could contribute to the maximum.
  Applying Theorem~\ref{thm:brown} we get that the cardinality of the set $I_u$ has Poisson distribution, which, by an application of Lemma~\ref{lem:dynkin}, has finite rate:
  \begin{align*}
  \lambda_u
    &
    \ := \
  \int_{\RR}
  \PP
  \left[
  \sup_{s\in[0,t]}
  X_s(x)
  \ \ge\  u
  \right]
    \alpha(x)\exp(-x)\,\mathrm{d}x
    \\&
    \ \le \
    \overline{\alpha}
    \cdot
    \left(
    \int_{-\infty}^{u-v}
  \PP
  \left[
  \sup_{s\in[0,t]}
  X_s(x)
  \ \ge\  u
  \right]
    \exp(-x)
  \,\mathrm{d}x
  \ + \
    \exp(-(u-v))
    \right)
    \,,
    \end{align*}
    where we chose $v>0$ as in Lemma~\ref{lem:dynkin}.
    Applying Lemma~\ref{lem:dynkin} gives
    \begin{align*}
      &
    \int_{-\infty}^{u-v}
  \PP
  \left[
  \sup_{s\in[0,t]}
  X_s(x)
  \ \ge\  u
  \right]
      \exp(-x)
  \,\mathrm{d}x
      \\&
  \ \le \
  C_{v,t}
  \cdot
      \int_{-\infty}^{u-v}
      \PP[L_{t\cdot \overline{\alpha}} \ge u - x - v]
      \exp(-x)
      \,\mathrm{d}x
      \\&
      \ = \
  C_{v,t}
  \cdot
      \exp(v-u)
  \cdot
      \int_{-\infty}^{u-v}
      \PP[\exp(L_{t\cdot \overline{\alpha}}) \ge \exp(u - x - v)]
      \exp(u-x-v)
      \,\mathrm{d}x
      \\&
      \ = \
  C_{v,t}
  \cdot
      \exp(v-u)
  \cdot
      \int_{1}^{\infty}
      \PP[\exp(L_{t\cdot \overline{\alpha}}) \ge y]
      \,\mathrm{d}y
      \\&
      \ \le \
  C_{v,t}
  \cdot
      \exp(v-u)
      \cdot
      \EE[\exp(L_{t\cdot \overline{\alpha}})]
      \ = \
  C_{v,t}
  \cdot
      \exp(v-u)
      \,.
    \end{align*}
 It follows $\lambda_u<\infty$, and therefore $|I_u|<\infty$ almost surely.
 Denote $J_u:=\{\inf_{s\in[0,t]}Z_s \ge u\}$, and
 note that $\PP[\cup_{u\in\mathbb{Z}}J_u]=1$. If $J_u$ for some $u\in \mathbb{Z}$, it therefore holds
 \begin{align*}
   Z_s
   \ = \
   \max_{i\in I_u} X^{(i)}_{s}(U^{(i)})
   \qquad\text{for all}\
   s\in [0,t]
   \,.
 \end{align*}
  Since $(X_s)_{s\in[0,t]}$ has paths in $\mathbb{D}([0,t])$, and $\mathbb{D}([0,t])$ is invariant under taking the maximum over finitely many functions, we get that $(Z_s)_{s\in[0,t]}$ has paths in $\mathbb{D}([0,t])$.
  If $(L_t)_{t\ge 0}\in \mathbb{C}([0,\infty))$, then $(X_t)_{t\ge 0}\in \mathbb{C}([0,\infty))$ and therefore $(Z_t)_{t\ge 0}\in \mathbb{C}([0,\infty))$ for the same reason.
  The stationarity of $(Z_t)_{t\ge 0}$ follows then immediately from $(Z_t)_{t\ge 0}$ being a well-defined process, the invariance of the process--measure pair $((X_t)_{t\ge 0}, \alpha(x)\exp(-x)\,\mathrm{d}x)$ by Theorem~\ref{thm:pert_inv}, and Theorem~\ref{thm:brown}(ii).
  This finishes the proof.
\end{proof}
\section{Max-domain of attraction}
\label{sec:mda}
In this final section we
study the extremal properties of the max-id processes constructed in Section~\ref{sec:max-id} through max-domains of attraction.
Let $\alpha$ be a mass function satisfying Assumption~\ref{asu:alpha},
let $(L_{t})_{t\ge 0}$ be a L\'evy process satisfying Assumption~\ref{asu:levy}, let $(X_t)_{t\ge 0}$ be the time-changed process in the sense of Lemma~\ref{lem:existence_tcp}, and let $(Z_t)_{t\ge 0}$ be the max-id process in the sense of Section~\ref{sec:max-id}.
Define, for $n\in\NN$,
the rescaled process $(\zeta_n(t))_{t\ge 0}$, by
\begin{align*}
    \zeta_n(t)
    \ := \
    \max_{k=1,\ldots,n} Z_t^{(k)} - \log(n)
    \,,
  \end{align*}
  where $((Z_t^{(k)})_{t\ge 0}),{k=1,\ldots,n}$, are i.i.d.~copies of $(Z_t)_{t\ge 0}$, and define
  the process
$(\zeta_{L}(t))_{t\ge 0}$
by
 \begin{align*}
    \zeta_L(t)
    \ := \
    \max \mathfrak{P}_{t}((L_t)_{t\ge 0}, \exp(-x)\mathrm{dx})
     \,,
  \end{align*}
  where $(L_t)_{t\ge 0}$ is the L\'evy process that was time-changed to obtain $(X_t)_{t\ge 0}$.
We show that $(Z_t)_{t\ge 0}$ is in the max-domain of attraction of the max-stable process $(\zeta_{L}(t))_{t\ge 0}$
in the sense that $(\zeta_n(t))_{t\ge 0}$
  converges weakly to $(\zeta_L(t))_{t\ge 0}$.
  \begin{remark}
  The weakest notion of convergence is convergence in finite-dimensional distributions (f.d.d), while in certain cases this can be strengthened to convergence in the space of c\`adl\`ag functions $\mathbb{D}([0,\infty))$, endowed with the Skorokhod topology.
\end{remark}
The main result of this section is Theorem~\ref{thm:max_domain}, where
we show
that $(\zeta_n(t))_{t\ge 0}$
  converges weakly to $(\zeta_L(t))_{t\ge 0}$, that is, $(Z_t)_{t\ge 0}$ is in the MDA of $\zeta_{L}$.
  The following auxiliary result isolates required convergence statements that we will use in the proof of Theorem~\ref{thm:max_domain}.
\begin{lemma}
  \label{lem:conv}
 Let $\alpha$ be a mass function satisfying Assumption~\ref{asu:alpha},
  let
$(L_t)_{t\ge 0}$ be a L\'evy process
satisfying Assumption~\ref{asu:levy},
    let $(X_t)_{t\ge 0}$ be the time-changed process in the sense of Lemma~\ref{lem:existence_tcp}, and write
    \begin{align*}
      \eta(x)
      \ := \
      \alpha(x)
      \exp(-x)
      \qquad\text{and}\qquad
  \eta_n(x)
  \ := \
  n\cdot \eta(x+\log(n))
  \,.
    \end{align*}
    As $n\to\infty$, we have
    \begin{enumerate}
      \item
\begin{align}
  \begin{split}
  \label{cond:thm:mda:1}
  \eta_n(x)
  &
  \ \to \
  \exp(-x)
  \qquad\forall x\in \RR\,,
  \\
  \int_{z}^\infty
  \eta_n(x)
  \,\mathrm{d}x
  &
  \ \to \
  \exp(-z)
  \qquad\forall z\in \RR\,,
  \end{split}
\end{align}
\item
\begin{align}
  \label{cond:thm:mda:2}
   X^n_t(x)
   \
   :=
   \
   X_t(x+\log(n))-\log(n)
   \ \to \
   L_t
   \qquad\text{weakly in}\qquad
   \mathbb{D}([0,\infty))
   \,.
\end{align}
    \end{enumerate}
  \end{lemma}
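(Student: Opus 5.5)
Part (i) is elementary. Since $\eta_n(x) = n\,\alpha(x+\log n)\exp(-x-\log n) = \alpha(x+\log n)\exp(-x)$, the pointwise statement follows from $\lim_{z\to\infty}\alpha(z)=1$ in Assumption~\ref{asu:alpha}. For the integral statement, write
\begin{align*}
  \int_z^\infty \eta_n(x)\,\mathrm{d}x \ = \ \int_z^\infty \alpha(x+\log n)\exp(-x)\,\mathrm{d}x\,.
\end{align*}
The integrand is dominated by $\overline{\alpha}\exp(-x)$, which is integrable on $[z,\infty)$. Dominated convergence then gives the limit $\exp(-z)$.

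For part (ii), first express $X^n$ explicitly. From $X_t(x)=L_{T_x(t)}+x$ we get
\begin{align*}
  X^n_t(x) \ = \ L_{T_{x+\log n}(t)}+x\,,
\end{align*}
so $X^n$ is the time-change of $L$ started at $x$ with mass function $\alpha_n(z):=\alpha(z+\log n)$. Concretely, $T^n := T_{x+\log n}$ is the inverse of
\begin{align*}
  A^n(t) \ = \ \int_0^t \alpha(L_r+x+\log n)\,\mathrm{d}r\,.
\end{align*}
I read the statement as convergence to $L_t+x$ for fixed starting point $x$ (or with $x=0$); the limit in \eqref{cond:thm:mda:2} is presumably $L_t$ started at $x$. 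The strategy is to show that the random clock converges to the identity almost surely, locally uniformly in $t$, on the same probability space. Then conclude with continuity of composition in the Skorokhod topology.

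The key step is $\sup_{s\le t}|A^n(s)-s|\to 0$ almost surely for each $t$. The bound is
\begin{align*}
  \sup_{s\le t}|A^n(s)-s| \ \le \ \int_0^t |\alpha(L_r+x+\log n)-1|\,\mathrm{d}r\,.
\end{align*}
Paths of $L$ are c\`adl\`ag, hence bounded on $[0,t]$, so $L_r+x+\log n\to\infty$ uniformly in $r\in[0,t]$. Together with $\alpha(z)\to1$ as $z\to\infty$, the integrand tends to $0$ uniformly on $[0,t]$, and the right-hand side vanishes; no dominated convergence is even needed. Since each $A^n$ is continuous, strictly increasing and satisfies $\underline{\alpha}\le (A^n)'\le\overline{\alpha}$, the inverses $T^n$ also converge to the identity uniformly on compacts. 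Indeed, $|T^n(s)-s| = |T^n(s)-A^n(T^n(s))|$ and $T^n(s)\le s/\underline{\alpha}$ stays in a compact set.

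Finally, I use the standard fact that the composition map $(f,\lambda)\mapsto f\circ\lambda$ is continuous at $(f,\mathrm{id})$ for $f\in\mathbb{D}([0,\infty))$ and continuous increasing $\lambda$ converging locally uniformly to the identity. In fact, the Skorokhod distance between $f\circ T^n$ and $f$ is at most the $J_1$ time-distortion $\sup_{s\le t}|T^n(s)-s|$ on each compact, because $T^n$ is itself an admissible time change (strictly increasing, continuous, onto). This gives $X^n_\cdot(x)\to L_\cdot+x$ almost surely in $\mathbb{D}([0,\infty))$, hence weakly. The main obstacle is only bookkeeping: checking that $T^n$ qualifies as a time deformation on $[0,\infty)$ in the chosen metric (e.g.\ the metric from Billingsley or Ethier--Kurtz). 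Strict monotonicity and continuity of $A^n$, noted after Definition~\ref{def:random_clock}, together with $A^n(t)\to\infty$ as $t\to\infty$ from Lemma~\ref{lem:existence_tcp}, settle this.
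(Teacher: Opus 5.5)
Your proof is correct. Part (i) is the paper's argument: dominated convergence with $\lim_{z\to\infty}\alpha(z)=1$.

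For part (ii) you take a genuinely different route. The paper writes $T_{x+\log(n)}(t)=\int_0^t 1/\alpha(X^n_s(x)+\log(n))\,\mathrm{d}s$. It checks that $1/\alpha(\cdot+\log(n))\to 1$ uniformly on compacts and that $\alpha$ is continuous, and then cites the Ethier--Kurtz theorem on weak convergence of random time changes as a black box.

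You instead couple all $X^n(x)$ through a single path of $L$ and work with the inverse clock $A^n$. This is an explicit integral along $L$, not the solution of an implicit equation in the time-changed process. You obtain $\sup_{s\le t}|A^n(s)-s|\to 0$, and hence $\sup_{s\le t}|T^n(s)-s|\to 0$, pathwise. You then use that $X^n(x)=(L+x)\circ T^n$ with $T^n$ a homeomorphism of $[0,\infty)$. The bookkeeping you flag is settled by the characterization of Skorokhod convergence on $[0,\infty)$ through strictly increasing onto time changes that converge locally uniformly to the identity (Ethier--Kurtz, Chapter~3, Proposition~5.3). Take $\lambda_n=T^n$; then $X^n(x)=(L+x)\circ\lambda_n$ holds exactly.

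Your route buys several things:
\begin{itemize}
\item It is self-contained.
\item It gives almost sure rather than merely weak convergence.
\item It uses only the bounds in Assumption~\ref{asu:alpha} and $\lim_{z\to\infty}\alpha(z)=1$; continuity of $\alpha$ plays no role.
\item The same estimate is uniform over starting points $x$ bounded below.
\end{itemize}
The paper's route is shorter on the page and would also cover driving processes that vary with $n$, but it rests on verifying the hypotheses of the cited theorem.

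Your reading of the limit as $L_t+x$, rather than the literal $L_t$, is also how the paper actually uses the lemma in the f.d.d.\ proof of Theorem~\ref{thm:max_domain}.
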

\begin{proof}
  The first part follows from Assumption~\ref{asu:alpha} and dominated convergence.
    To show the
    second part,
    we apply
  \cite[Theorem~1.5]{ethierMarkovProcessesCharacterization1986}.
  To this end, note that
  \begin{align*}
    X^{n}_t(x)
    &
    \ = \
    X_t(x+\log(n))
    \ - \
    \log(n)
    \ = \
    L_{T_{x+\log(n)}(t)}
    +x
       \,,
  \end{align*}
  where
  \begin{align*}
 \\&
    T_{x+\log(n)}(t)
    \ = \
    \int_0^t
    \frac{1}{\alpha
    \big(
    X^{}_s(x+\log(n))\big)
    }
    \,\mathrm{d}s
    \ = \
    \int_0^t
    \frac{1}{\alpha
    \left(
    X^{n}_s(x)+\log(n)
    \right)
    }
    \,\mathrm{d}s
    \,,
  \end{align*}
  and by Assumption~\ref{asu:alpha}
  \begin{align*}
    \lim_{n\to\infty}
    \sup_{z\in K}
    \left|
    \frac{1}{\alpha(z+\log(n))}
    \ - \ 1
    \right|
    \ = \ 0
    \,,
  \end{align*}
  for all compact sets $K\subset {\RR}$.
  Since $\alpha$ is continuous by Assumption~\ref{asu:alpha},
  this allows us to apply
  \cite[Theorem~1.5]{ethierMarkovProcessesCharacterization1986}, which finishes the proof.
\end{proof}
  \begin{theorem}[max-domain of attraction of the constructed max-id process]
    \label{thm:max_domain}
 Let $\alpha$ be a mass function satisfying Assumption~\ref{asu:alpha},
  let
$(L_t)_{t\ge 0}$ be a L\'evy process
satisfying Assumption~\ref{asu:levy}, and
    let $(X_t)_{t\ge 0}$ be the time-changed process in the sense of Lemma~\ref{lem:existence_tcp}.
    Consider the max-id process $(Z_t)_{t\ge 0}$ defined by
\begin{align*}
  Z_t
  \ = \
  \max
  \mathfrak{P}_t
  \left(
  (X_t)_{t\ge 0}, \alpha(x)\exp(-x)\mathrm{d}x
  \right)
  \,,
\end{align*}
let
\begin{align*}
    \zeta_n(t)
    \ := \
    \max_{k=1,\ldots,n} Z_t^{(k)} - \log(n)
    \,,
    \qquad n\in\NN
    \,,
  \end{align*}
  where $((Z_t^{(k)})_{t\ge 0})_{k=1,\ldots,n}$ are i.i.d.~copies of $(Z_t)_{t\ge 0}$, and let
\begin{align*}
    \zeta_L(t)
    \ := \
    \max \mathfrak{P}_t((L_t)_{t\ge 0},\exp(-x)\mathrm{d}x)
    \,.
  \end{align*}
  Then $\zeta_n$ converges weakly to $\zeta_L$ in f.d.d.
  If, furthermore, $(L_t)_{t\ge 0}$ has continuous paths,
  then convergence holds in $\mathbb{D}([0,\infty))$.
\end{theorem}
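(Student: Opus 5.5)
The key observation is that $\zeta_n$ is again a maximum over a Poisson particle system. By superposition, the maximum of $n$ independent copies of $Z$ is the maximum over a PPP with intensity $n\,\eta(x)\,\mathrm{d}x$ whose points carry i.i.d. copies of $(X_t)_{t\ge0}$. Subtracting $\log(n)$ and substituting $x\mapsto x+\log(n)$ gives, in distribution,
\begin{align*}
  \zeta_n(t)
  \ = \
  \max_{i\in\NN}
  X^{n,(i)}_t\big(V_n^{(i)}\big)
  \,,
\end{align*}
where $\{V_n^{(i)}\}_{i\in\NN}$ is a PPP with intensity $\eta_n(x)\,\mathrm{d}x$ and $X^n$ is as in Lemma~\ref{lem:conv}.

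For f.d.d.\ convergence I would compute the finite-dimensional distribution functions directly via Theorem~\ref{thm:brown}(i). Fix $0\le t_1<\cdots<t_\ell$ and $z_1,\ldots,z_\ell$. Then
\begin{align*}
  \PP[\zeta_n(t_j)\le z_j\ \forall j]
  \ = \
  \exp\Big(-\int_{\RR}\PP\big[\exists j:\ X^n_{t_j}(x)>z_j\big]\,\eta_n(x)\,\mathrm{d}x\Big),
\end{align*}
and the same formula holds for $\zeta_L$ with $X^n$ replaced by $L+x$ and $\eta_n$ by $e^{-x}$. It therefore suffices to show convergence of the exponent. I would write $X^n_t(x)=L_{T_{x+\log n}(t)}+x$. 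Pointwise in $x$, the integrand converges by Lemma~\ref{lem:conv}(ii), since the fixed times $t_j$ are a.s.\ continuity points of $L$; together with $\eta_n\to e^{-x}$ from Lemma~\ref{lem:conv}(i), this gives pointwise convergence of the integrand. Strictly, one should use $\ge$ versus $>$ at continuity points of the limit law, which are all but countably many $z$.

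The main obstacle is domination of the integral, uniformly in $n$. On $x\ge -M$, the bound $\eta_n\le\overline{\alpha}e^{-x}$ makes the integrand integrable on each half-line. On $x\to-\infty$, I would use the bound from Lemma~\ref{lem:dynkin}, which holds uniformly over starting points and hence over $n$:
\begin{align*}
  \PP\big[\sup_{s\le t_\ell}X^n_s(x)\ge z\big]
  \ \le \
  C_{v,t_\ell}\,\PP\big[L_{t_\ell\overline{\alpha}}\ge z-x-v\big]
\end{align*}
for $z-x>v$, where $z=\min_j z_j$. Multiplying by $\overline{\alpha}e^{-x}$ gives a function integrable over $x\in\RR$, as computed in the proof of Theorem~\ref{thm:cad}, since that integral equals a multiple of $\EE[e^{L_{t_\ell\overline\alpha}}]=1$. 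Dominated convergence then yields convergence of the exponent, hence f.d.d.\ convergence. Because both sides are max-id with this exponent representation, this convergence of distribution functions is exactly weak f.d.d.\ convergence.

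For continuous $L$, I would upgrade to $\mathbb{D}([0,\infty))$, enough to work on each $[0,t]$, by a tightness-plus-localization argument mirroring the proof of Theorem~\ref{thm:cad}. Fix $t$ and a low threshold $u$. With probability at least $1-\varepsilon$ uniformly in $n$, $\inf_{s\le t}\zeta_n(s)\ge u$. I would obtain this by comparing with the single-time marginal at a grid plus the modulus of the contributing particles, or more simply by using the convergence of $\PP[\zeta_n(0)\ge u']$ and of a top particle's path. On that event, $\zeta_n$ on $[0,t]$ is the maximum over the finitely many particles in $I_u$, whose number is Poisson with a mean that converges, by the same dominated convergence as above, to the corresponding mean for $L$. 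I would then represent the restricted PPP on $\{x:\sup_{s\le t}X^n_s(x)\ge u\}$ as a Poisson number of i.i.d.\ paths. Their starting point laws converge (normalized $\eta_n$ restricted to that set), and their path laws converge in $\mathbb{C}([0,t])$ by Lemma~\ref{lem:conv}(ii) together with continuity in the starting point. Since the maximum of finitely many paths is a continuous map $\mathbb{C}^k\to\mathbb{C}$, the continuous mapping theorem gives convergence of the maxima on this event. Letting $u\to-\infty$ completes the argument. I expect the uniform control of $\inf_{s\le t}\zeta_n(s)$ to be the delicate point; if it resists, I would instead verify a modulus-of-continuity tightness criterion through the same finite-particle representation.
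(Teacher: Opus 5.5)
Your proposal is correct, but it takes a different route from the paper in both parts.

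\emph{F.d.d.\ part.} The paper applies inclusion--exclusion and then uses the invariance of $((X_t)_{t\ge 0},\eta)$ (Theorem~\ref{thm:pert_inv}) to shift each intersection event so that its earliest time is $0$. This confines the integral to a half-line $x\ge x_{\min\{j\in I\}}$, on which $\eta_n\le\overline{\alpha}e^{-x}$ alone dominates. Invariance of $e^{-x}\mathrm{d}x$ for $L$ then undoes the shift. You instead dominate the full integrand with the supremum bound of Lemma~\ref{lem:dynkin}, as in the finiteness computation of Theorem~\ref{thm:cad}. This needs neither inclusion--exclusion nor invariance. One small correction: pointwise convergence can fail only on a countable, hence Lebesgue-null, set of $x$, so no restriction on the $z_j$ is needed.

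\emph{Path-space part.} The paper proves tightness of the modulus of continuity through a Kabluchko-type decomposition: the events $A_{1,n},B_{1,n},C_{1,n}$, the top particle's modulus $D_{1,n}$, and the overtaking event $D_{2,n}$ controlled by Slivnyak--Mecke. It then combines this with f.d.d.\ convergence. Your localization avoids modulus estimates altogether. The same two ingredients, Lemma~\ref{lem:conv}(ii) for fixed $x$ plus the Lemma~\ref{lem:dynkin} majorant, give weak convergence of the finite exponent measures restricted to $\{f\in\mathbb{C}([0,t]):\sup f\ge u\}$. Then $\zeta_n\vee u$ is a continuous function of the resulting finite Poisson configuration. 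Two clarifications on this step:
\begin{itemize}
\item The object that converges is the thinned intensity $\PP[\sup_{s\le t}X^n_s(x)\ge u]\,\eta_n(x)\,\mathrm{d}x$ together with the conditional path law, not $\eta_n$ restricted to a set.
\item Dominated convergence in $x$ makes continuity in the starting point unnecessary.
\end{itemize}
The step you flag as delicate is in fact easy. Let $V_n^{(1)}$ be the top starting point. Since $X_s(y)-y=L_{T_y(s)}$ and $T_y(s)\le s/\underline{\alpha}$, you get $\inf_{s\le t}\zeta_n(s)\ge V_n^{(1)}+\inf_{r\le t/\underline{\alpha}}L^{(1)}_r$, with the attached path independent of $V_n^{(1)}$. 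Moreover $\PP[V_n^{(1)}<u']\le\exp(-\underline{\alpha}e^{-u'})$ for every $n$. A standard approximation argument as $u\to-\infty$ then concludes.

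\emph{Comparison.} Your route is shorter and more conceptual, and it yields convergence without a separate tightness step. The paper's route is more hands-on, exploits the stationarity structure explicitly in the f.d.d.\ part, and follows the established template from the max-stable literature.
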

\begin{remark}
  The only L\'evy processes with continuous paths satisfying Assumption~\ref{asu:levy} are $(\sqrt{\sigma}(B_t-\sqrt{\sigma}t/2))_{t\ge 0}$, for $\sigma\ge 0$. For $\sigma=1$ we get the classical Brown--Resnick process $\zeta_{\mathrm{BR}}$ from the introduction.
  In view of \cite[Theorem~2.4]{linConvergenceExtremeValue2001},
  it is natural that convergence in $\mathbb{D}([0,\infty))$ can only hold when the limit is continuous.
  A concrete reason to support this is that for the extended modulus of continuity
  \begin{align*}
    \omega''(f,\delta, K)
    \ := \
    \sup_{
    \begin{smallmatrix}
      t_1, t_2\in K\\
      t_1 \le s \le t_2\,,
      |t_1-t_2|<\delta
    \end{smallmatrix}
    }
    |f(s)-f(t_1)|
    \land
    |f(s)-f(t_2)|
    \,,
  \end{align*}
  we cannot proceed as in
  \eqref{eq:tr_eq}.
\end{remark}
\begin{proof}[\textbf{Proof f.d.d}]
  Let $\ell\in\NN$, and $0\le t_1<\ldots,<t_\ell<\infty$.
  We want to show that
  \begin{align}
    \label{eq:goal:proof:fdd}
    \left(
    \zeta_n(t_{1})
    \,,\ldots,\,,
    \zeta_n(t_{\ell})
    \right)
    \ \to \
    \left(
    \zeta_L(t_{1})
    \,,\ldots,\,,
    \zeta_L(t_{\ell})
    \right)
    \qquad\text{in distribution as}\ n\to\infty\,.
  \end{align}
  To this end, let $x_1,\ldots,x_\ell\in\RR$ and let $\eta$ and $\eta_n$ be defined as in Lemma~\ref{lem:conv}.
  Note that since $\zeta_n$ is the maximum over $n$ independent and identically distributed max-id processes, it holds that
  \begin{align*}
    &
    \PP
    \left[
    \forall_{j\in\{1,\ldots,\ell\}}
    \
    \zeta_n(t_j)
    \
    \le
    \
    x_j
    \right]
    \\&
    \ = \
    \PP
    \left[
    \forall_{j\in\{1,\ldots,\ell\},k\in\{1,\ldots,n\}}
    \
    Z_{t_j}^{(k)}
    \
    \le
    \
    x_j +\log(n)
    \right]
    \\&
    \ = \
    \left(
    \PP
    \left[
    \forall_{j\in\{1,\ldots,\ell\}}
    \
    \max_{i\in\NN} X^{(i)}_{t_j}(U^{(i)})
    \
    \le
    \
    x_j +\log(n)
    \right]
    \right)^n
    \\&
    \ = \
    \left(
    \PP
    \left[
    \sum_{i\in\NN}
    \ind\{
    \exists_{j\in\{1,\ldots,\ell\}}
    \
    X^{(i)}_{t_j}(U^{(i)})
    > x_j +\log(n)
    \}
    \ = \ 0
    \right]
    \right)^n
    \\&
    \ = \
    \exp
    \left(
    -
    n
    \int_\RR
    \PP[
    \exists_{j\in\{1,\ldots,\ell\}}
    \
    X_{t_j}(x)
    > x_j +\log(n)
    ]
    \eta(x)
    \,\mathrm{d}x
    \right)
    \,,
      \end{align*}
      where the last step follows from Theorem~\ref{thm:brown}(i).
      Applying the inclusion-exclusion formula and Tonelli's theorem, we get
      \begin{align*}
        &
    \int_\RR
    \PP[
    \exists_{j\in\{1,\ldots,\ell\}}
    \
    X_{t_j}(x)
    > x_j +\log(n)
    ]
    \cdot
    \eta(x)
    \,\mathrm{d}x
        \\&
            \ = \
        \sum_{l=1}^\ell
        (-1)^{l+1}
        \sum_{\begin{smallmatrix}
          I\subseteq \{1,\ldots,\ell\}\\
          |I|=l
        \end{smallmatrix}}
            \int_\RR
    \PP[
        \forall_{j\in I}
    \
    X_{t_j}(x)
    > x_j +\log(n)
    ]
    \cdot
        \eta(x)
        \,\mathrm{d}x
        \,.
      \end{align*}
      By invariance of the process measure pair $((X_t)_{t\ge 0}, \eta)$
      (see Theorem~\ref{thm:pert_inv})
      we get for fixed $I\subseteq \{1,\ldots,\ell\}$ that
      \begin{align*}
        &
        n
            \int_\RR
    \PP[
        \forall_{j\in I}
    \
    X_{t_j}(x)
    > x_j +\log(n)
    ]
    \cdot
        \eta(x)
        \,\mathrm{d}x
        \\&
        \ = \
        n
            \int_{x_{\min\{j\in I\}}+\log(n)}^\infty
    \PP[
        \forall_{j\in I}
    \
        X_{t_j-t_{\min\{j\in I\}}}(x)
    > x_j +\log(n)
    ]
    \cdot
        \eta(x)
        \,\mathrm{d}x
        \\&
        \ = \
            \int_{x_{\min\{j\in I\}}}^\infty
    \PP[
        \forall_{j\in I}
    \
        X^n_{t_j-t_{\min\{j\in I\}}}(x)
    > x_j
    ]
    \cdot
        \eta_n(x)
        \,\mathrm{d}x
        \\&
        \ \to\
            \int_{x_{\min\{j\in I\}}}^\infty
    \PP[
        \forall_{j\in I}
    \
        (L_{t_j-t_{\min\{j\in I\}}}+x)
    > x_j
    ]
    \cdot
        \exp(-x)
        \,\mathrm{d}x
        \,.
      \end{align*}
      The convergence follows from
      \begin{align*}
    \PP[
        \forall_{j\in I}
    \
        X^n_{t_j-t_{\min\{j\in I\}}}(x)
    > x_j
    ]
    \cdot
        \eta_n(x)
        \ \le \
        \eta_n(x)
        \ \le \
        \overline{a}\exp(-x)
        \qquad\text{for all}
        \ x\in\RR
        \,,
      \end{align*}
      Lemma~\ref{lem:conv}
      and dominated convergence.
      Using the invariance
      of $((L_t)_{t\ge 0}, \exp(-x)\mathrm{d}x)$ we reverse the time shift,
and reversing the previous steps as well, we find that this expression equals
      \begin{align*}
        \PP
        \left[
        \forall_{j\in\{1,\ldots,\ell\}}
        \
        \zeta_L(t_j)\ \le \ x_j
        \right]
        \,.
      \end{align*}
      Since what we showed is equivalent to
      \eqref{eq:goal:proof:fdd}, this finishes the proof.
\end{proof}
\begin{proof}[\textbf{Proof of convergence in $\mathbb{D}([0,\infty))$}]
  By assumption, $(L_t)_{t\ge 0}\in \mathbb{C}([0,\infty))$. Theorem~\ref{thm:cad} then implies that $\zeta_L$, $ (Z_t)_{t\ge 0}$ and $\zeta_n$ have paths in $\mathbb{C}([0,\infty))$. Hence, it suffices to prove convergence in $\mathbb{C}([0,\infty))$.

  We prove convergence by establishing
  tightness. First note that $(\zeta_n(0))_{n\in\NN}$ converges weakly to a Gumbel distributed random variable and is therefore tight. By
  \cite[Theorem~7.3]{billingsleyConvergenceProbabilityMeasures1999}, showing tightness of $\zeta_n$ then amounts to controlling
  the modulus of continuity $\omega$.
  Let $K\subset [0,\infty)$ be an arbitrary compact set (possibly not containing $0$). For $f\in \mathbb{C}(K)$ and $\delta>0$ we define the modulus of continuity by
  \begin{align*}
\omega
  \left(
  f,\delta, K
  \right)
  &
  \ := \
  \sup_{
  \begin{smallmatrix}
    s,t\in K\\
    |s-t|<\delta
  \end{smallmatrix}
  }
    \left|
    f(t)-f(s)
        \right|
        \,.
  \end{align*}
  Our goal is to show
that for all $\varepsilon, a > 0$ there exists some $\delta>0$ and $N\in\NN$ such that
\begin{align}
  \label{eq:cond:tightness}
  \PP[\omega(\zeta_n, \delta, K) > a]
  < \varepsilon
  \qquad\text{for all}\ n \ > \ N
  \,.
\end{align}
  Together with the f.d.d. convergence, we get convergence in $\mathbb{C}(K)$ for an arbitrary compact set $K\subset  [0,\infty)$ and therefore convergence in $\mathbb{C}[0,\infty)$.
  We adapt parts of the proof of \cite[Theorem~17]{kabluchkoStationaryMaxstableFields2009} to our setting.
Write
  \begin{align*}
   Y_{k,n}(t)\ :=\ Z_{t}^{(k)}-\log(n)\,,\qquad t \ \ge \ 0\,.
  \end{align*}
  We structure the proof in several steps.
  \\
  \textbf{Decomposition}
  \\
  We decompose the event
  $
  \left\{
  \omega
  \left(
  \zeta_n, \delta, K
  \right)
  \ >\ a
  \right\}
  $.
Note that, by the properties of maxima and
 the invariance of the modulus of continuity $\omega$ under spatial shifts,
 we have
\begin{align}
  \label{eq:tr_eq}
  \begin{split}
  \omega
  \left(
  \zeta_n
  , \delta, K
  \right)
  &
  \ = \
  \sup_{
  \begin{smallmatrix}
    s,t\in K\\
    |s-t|<\delta
  \end{smallmatrix}
  }
    \left|
    \max_{k=1,\ldots,n}
    Z^{(k)}_t
    -
    \max_{k=1,\ldots,n}
    Z^{(k)}_s
    \right|
    \\&
    \ \le \
    \max_{k=1,\ldots,n}
  \sup_{
  \begin{smallmatrix}
    s,t\in K\\
    |s-t|<\delta
  \end{smallmatrix}
  }
    \left|
    Z^{(k)}_t
    -
    Z^{(k)}_s
    \right|
    \ = \
    \max_{k=1,\ldots,n}
    \omega(Y_{k,n},\delta, K)
    \,.
  \end{split}
\end{align}
Therefore
\begin{align}
  \label{eq:dec:c}
  \begin{split}
  &
  \left\{
  \omega
  \left(
  \zeta_n, \delta, K
  \right)
  \ >\ a
  \right\}
  \\
  &
  \ = \
  \left\{
  \omega
  \left(
  \zeta_n, \delta, K
  \right)
  \ >\ a
  \quad\text{and}\quad
    \exists_{t\in K}
    \
    \zeta_n(t)
    \neq
  \max_{
  \begin{smallmatrix}
  k=1,\ldots,n\\
    Y_{k,n}(0) > -c_2
  \end{smallmatrix}
  }
    Y_{k,n}(t)
  \right\}
  \\&
  \qquad\cup\
  \left\{
  \omega
  \left(
  \zeta_n, \delta, K
  \right)
  \ >\ a
  \quad\text{and}\quad
    \forall_{t\in K}
    \
    \zeta_n(t)
    =
  \max_{
  \begin{smallmatrix}
  k=1,\ldots,n\\
    Y_{k,n}(0) > -c_2
  \end{smallmatrix}
  }
    Y_{k,n}(t)
  \right\}
  \\&
  \subset\
\left\{
    \exists_{t\in K}
    \
    \zeta_n(t)
    \neq
  \max_{
  \begin{smallmatrix}
  k=1,\ldots,n\\
    Y_{k,n}(0) > -c_2
  \end{smallmatrix}
  }
    Y_{k,n}(t)
  \right\}
  \\&
  \qquad
  \ \cup \
  \left\{
  \omega
  \left(
  \zeta_n, \delta, K
  \right)
  \ >\ a
  \quad\text{and}\quad
    \forall_{t\in K}
    \
    \zeta_n(t)
     =
  \max_{
  \begin{smallmatrix}
  k=1,\ldots,n\\
    Y_{k,n}(0) > -c_2
  \end{smallmatrix}
  }
    Y_{k,n}(t)
  \quad\text{and}\quad
    \exists_{k\in\{1,\ldots,n\}}
    \
    Y_{k,n}(0) > -c_2
  \right\}
  \\&
  \ \subset  \
\left\{
    \exists_{t\in K}
    \
    \zeta_n(t)
    \neq
  \max_{
  \begin{smallmatrix}
  k=1,\ldots,n\\
    Y_{k,n}(0) > -c_2
  \end{smallmatrix}
  }
    Y_{k,n}(t)
  \right\}
  \ \cup \
  \ \bigcup_{k=1}^n\
  \left\{
  \omega
  \left(
  Y_{k,n}, \delta, K
  \right)
  \ >\ a
  \quad\text{and}\quad
    Y_{k,n}(0) > -c_2
  \right\}
  \\&
  \ =: \
  G_n \ \cup\  \bigcup_{k=1}^n C_{k,n}
  \,,
  \end{split}
\end{align}
where in the last step we used
  \eqref{eq:tr_eq}.
Next we define and estimate
\begin{align}
  \label{eq:dec:a}
  \begin{split}
  E_n
  &
  \ := \
  \left\{
  \inf_{t\in K} \zeta_n(t)
  \ <\ -c_1
  \right\}
  \\
  &
  \
  \subset
  \
  \bigcap_{k=1}^n
  \left\{
  \inf_{t\in K} Y_{k,n}(t)
  \ <\ -c_1
  \right\}
  \\&
  \
  \subset
  \
  \bigcap_{k=1}^n
  \left\{
  \inf_{t\in K} Y_{k,n}(t)
  \ <\ -c_1
  \qquad\text{or}\qquad
  Y_{k,n}(0)\ <\ -c_1
  \right\}
  \ =: \
  \bigcap_{k=1}^n
  A_{k,n}
  \,.
  \end{split}
\end{align}
With $E_n$ defined in this way, we have
 \begin{align}
   \label{eq:dec:b}
   \begin{split}
  G_n
  &
  \subset
  E_n \ \cup \
  \left\{
    \exists_{t^*\in K, k\in \{1,\ldots,n\}}
    \
    \zeta_n(t^*)
    \ = \
    Y_{k,n}(t^*)
    \,,
    \quad
    Y_{k,n}(0) \le -c_2
    \quad\text{and}\quad
    \inf_{t\in K}\zeta_n(t)
    \ \ge \ -c_1
  \right\}
  \\&
  \ \subset\
  E_n
  \ \cup \
  \bigcup_{k=1}^n
  \left\{
    Y_{k,n}(0) \ \le\  - c_2
        \qquad\text{and}\qquad
    \sup_{t\in K}Y_{k,n}(t)
    \ \ge \ -c_1
  \right\}
  \ =: \
   E_n \ \cup \ \bigcup_{k=1}^n B_{k,n}
  \,.
   \end{split}
\end{align}
Using \eqref{eq:dec:c}, \eqref{eq:dec:a}, and \eqref{eq:dec:b}, we get
\begin{align*}
  \left\{
  \omega
  \left(
  \zeta_n, \delta, K
  \right)
  \ >\ a
  \right\}
  \ \subset\
  \bigcap_{k=1}^n
  A_{k,n}
  \ \cup\
  \bigcup_{k=1}^n
  B_{k,n}
  \ \cup\
  \bigcup_{k=1}^n
  C_{k,n}
  \,.
\end{align*}
Since the events are i.i.d. for $k\in\{1,\ldots,n\}$, we have
\begin{align}
  \label{eq:bound:omega}
  \begin{split}
  \PP
  \left[
  \omega
  \left(
  \zeta_n, \delta, K
  \right)
  \ >\ a
  \right]
  &
  \ \le \
  (\PP[A_{1,n}])^n
  \ + \
  1
  \ - \
  \left(
    \PP[B_{1,n}^c]
  \right)^n
  \ + \
  n
  \PP[C_{1,n}]
  \,,
  \end{split}
\end{align}
where
\begin{align*}
  A_{1,n}
  &
  \ = \
  \left\{
  \inf_{t\in K} Y_{1,n}(t)
  \ < \ -c_1
  \qquad\text{or}\qquad
  Y_{1,n}(0)\ < \ -c_1
  \right\}
    \,,
  \\
  B_{1,n}^c
  &
  \ = \
  \left\{
    Y_{1,n}(0) \ >\  - c_2
        \qquad\text{or}\qquad
    \sup_{t\in K}Y_{1,n}(t)
    \ < \ -c_1
  \right\}
  \,,
  \\
  C_{1,n}
  &
  \ = \
  \left\{
  \omega
  \left(
  Y_{1,n}, \delta, K
  \right)
  \ >\ a
  \quad\text{and}\quad
    Y_{1,n}(0) > -c_2
  \right\}
  \,.
\end{align*}
\\
\textbf{Analysis of $(\PP[A_{1,n}])^n$}
\\
The goal is to show that there exists $c_1$ such that, for $N=N(c_1)$ sufficiently large, we have
\begin{align}
  \label{eq:a:0}
  \left(
  \PP[A_{1,n}]
  \right)^n
  \ \le \
  \frac{\varepsilon}{4}
  \qquad \text{for all} \ n\ge N(c_1)
  \,.
\end{align}
Observe that
\begin{align*}
  A_{1,n}
  &
  \ = \
  \left\{
  \inf_{t\in K}
  \max_{i\in\NN}
  X_t^{(i)}(U^{(i)})
  \ < \ -c_1 + \log(n)
  \qquad\text{or}\qquad
  \max_{i\in\NN} U^{(i)} \ < \ -c_1 + \log(n)
  \right\}
  \\&
  \ \subset\
  \left\{
  \max_{i\in\NN}
  \inf_{t\in K}
  X_t^{(i)}(U^{(i)})
  \ < \ -c_1 + \log(n)
  \qquad\text{or}\qquad
  \max_{i\in\NN} U^{(i)} \ < \ -c_1 + \log(n)
  \right\}
  \\&
  \ = \
  \left\{
  \forall_{i\in\NN}
  \,
  \inf_{t\in K}
  X_t^{(i)}(U^{(i)})
  \ < \ -c_1 + \log(n)
  \qquad\text{or}\qquad
  \forall_{i\in\NN}
  \,
   U^{(i)} \ < \ -c_1 + \log(n)
  \right\}
  \\&
  \ = \
  \bigcap_{i\in\NN}
  \left\{
  \inf_{t\in K}
  X_t^{(i)}(U^{(i)})
  \ < \ -c_1 + \log(n)
  \right\}
  \
  \cup
  \
  \bigcap_{i\in\NN}
  \left\{
   U^{(i)} \ < \ -c_1 + \log(n)
  \right\}
  \\&
  \ \subset\
  \bigcap_{i\in\NN}
  \left\{
  \inf_{t\in K}
  X_t^{(i)}(U^{(i)})
  \ < \ -c_1 + \log(n)
  \qquad\text{or}\qquad
  U^{(i)} \ < \ -c_1 + \log(n)
  \right\}
  \\&
  \ = \
  \bigcap_{i\in\NN}
  \left\{
  \inf_{t\in K}
  X_t^{(i)}(U^{(i)})
  \ \ge \ -c_1 + \log(n)
  \qquad\text{and}\qquad
  U^{(i)} \ \ge \ -c_1 + \log(n)
  \right\}^c
  \\&
  \ = \
  \left\{
  \sum_{i\in\NN}
  \ind\left\{
  \inf_{t\in K}
  X_t^{(i)}(U^{(i)})
  \ \ge \ -c_1 + \log(n)
  \qquad\text{and}\qquad
  U^{(i)} \ \ge \ -c_1 + \log(n)
  \right\}
  \ = \ 0
  \right\}
  \,,
\end{align*}
so that
\begin{align}
  \label{eq:a:1}
      \begin{split}
  &
  \left(
    \PP[A_{1,n}]
  \right)^n
    \\&
    \ \le \
    \left(
    \PP
    \left[
    \sum_{i\in \NN}
    \ind
    \left\{
    \inf_{t\in K} X^{(i)}_t(U^{(i)}) - \log(n)
  \ \ge \ -c_1
  \qquad\text{and} \qquad
    U^{(i)}\ \ge \ \log(n)-c_1
    \right\}
    \ = \ 0
      \right]
        \right)^{n}
      \,.
      \end{split}
\end{align}
To further bound this, note that, by Theorem~\ref{thm:brown} and the change of variables $x\mapsto x+\log(n)$,
\begin{align}
  \label{eq:a:2}
    \begin{split}
  &
  \left(
    \PP
    \left[
    \sum_{i\in \NN}
    \ind
    \left\{
    \inf_{t\in K} X^{(i)}_t(U^{(i)}) - \log(n)
  \ \ge \ -c_1
  \qquad\text{and} \qquad
    U^{(i)}\ \ge \ \log(n)-c_1
    \right\}
    \ = \ 0
      \right]
  \right)^n
      \\&
      \ = \
    \exp
    \left(
    -
    n
    \int_{\log(n)-c_1}^{\infty}
    \PP
    \left[
    \inf_{t\in K} X^{}_t(x) - \log(n)
  \ \ge \ -c_1
    \right]
    \eta(x)
    \,\mathrm{d}x
    \right)
    \\&
    \ = \
    \exp
    \left(
    -
    \int_{-c_1}^{\infty}
    \PP
    \left[
    \inf_{t\in K}
    X^{n}_t(x)
  \ \ge \ -c_1
    \right]
    \eta_n(x)
    \,\mathrm{d}x
    \right)
    \,,
    \end{split}
\end{align}
where
\begin{align*}
  X^{n}_t(x)
  \ = \
  X_t(x+\log(n))-\log(n)
  \,.
\end{align*}
To apply weak convergence of $(X_t^n)_{t\ge 0}$ to $(L_t)_{t\ge 0}$ (see Lemma~\ref{lem:conv}(ii)), we write
\begin{align}
  \label{eq:a:3}
    \begin{split}
  &
    \exp
    \left(
    -
    \int_{-c_1}^{\infty}
    \PP
    \left[
    \inf_{t\in K}
    X^{n}_t(x)
  \ \ge \ -c_1
    \right]
    \eta_n(x)
    \,\mathrm{d}x
    \right)
    \\&
    \ \le \
  \exp
    \left(
    -
    \int_{-c_1}^{\infty}
    \PP
    \left[
    \inf_{t\in K}
    L_t
    +
    x
  \ \ge \
  -c_1
    \right]
    \exp(-x)
    \,\mathrm{d}x
    \right)
    \\&
    \qquad + \
    \left|
    \exp
    \left(
    -
    \int_{-c_1}^{\infty}
    \PP
    \left[
    \inf_{t\in K} X^{n}_t(x)
  \ \ge \ -c_1
    \right]
    \eta_n(x)
    \,\mathrm{d}x
    \right)
    \right.
    \\&
    \left.
    \qquad\qquad - \
    \exp
    \left(
    -
    \int_{-c_1}^{\infty}
    \PP
    \left[
    \inf_{t\in K}
    L_t + x
  \ \ge \ -c_1
    \right]
    \exp(-x)
    \,\mathrm{d}x
    \right)
    \right|
    \,.
    \end{split}
\end{align}
For the first term of the upper bound in \eqref{eq:a:3},
making the change of variables $x\mapsto x+c_1$ we get
\begin{align*}
  &
    \int_{-c_1}^{\infty}
    \PP
    \left[
    \inf_{t\in K}
    L_t
    +
    x
  \ \ge \
  -c_1
    \right]
    \exp(-x)
    \,\mathrm{d}x
    \\
    &
    \ = \
    \exp(c_1)
    \int_{0}^{\infty}
    \PP
    \left[
    \inf_{t\in K}
    L_t
    +
    x
  \ \ge \
  0
    \right]
    \exp(-x)
    \,\mathrm{d}x
    \ >\ 0
    \,,
\end{align*}
so that, for $c_1$ large enough, it holds
\begin{align*}
  \exp
  \left(
  -
    \int_{-c_1}^{\infty}
    \PP
    \left[
    \inf_{t\in K}
    L_t
    +
    x
  \ \ge \
  -c_1
    \right]
    \exp(-x)
    \,\mathrm{d}x
  \right)
  \ \le \
  \frac{\varepsilon}{8}
  \,.
\end{align*}
For the second term of the upper bound in \eqref{eq:a:3}, note that analogously to the convergence proof for f.d.d. we get that
  \begin{align*}
    \int_{-c_1}^{\infty}
    \PP
    \left[
    \inf_{t\in K} X^{n}_t(x)
  \ \ge \ -c_1
    \right]
    \eta_n(x)
    \,\mathrm{d}x
    \ \to \
    \int_{-c_1}^{\infty}
    \PP
    \left[
    \inf_{t\in K}
    L_t + x
  \ \ge \ -c_1
    \right]
    \exp(-x)
    \,\mathrm{d}x
  \end{align*}
  for all $c_1\in \RR$ and as $n\to\infty$. We can therefore choose $n\ge N=N(c_1)$ large enough, such that
\begin{align*}
  &
    \left|
    \exp
    \left(
    -
    \int_{-c_1}^{\infty}
    \PP
    \left[
    \inf_{t\in K} X^{n}_t(x)
  \ \ge \ -c_1
    \right]
    \eta_n(x)
    \,\mathrm{d}x
    \right)
    \right.
    \\&
    \left.
    \qquad\qquad - \
    \exp
    \left(
    -
    \int_{-c_1}^{\infty}
    \PP
    \left[
    \inf_{t\in K}
    L_t + x
  \ \ge \ -c_1
    \right]
    \exp(-x)
    \,\mathrm{d}x
    \right)
    \right|
    \ \le \
    \frac{\varepsilon}{8}
    \,.
\end{align*}
Combining \eqref{eq:a:1}-\eqref{eq:a:3} yields \eqref{eq:a:0}.
  \\
  \textbf{Analysis of $1-\PP[B_{1,n}^c]^n$}
\\
The goal is to show that there exists $c_2=c_2(c_1)$ such that, for $N(c_1,c_2)$ sufficiently large, we have
\begin{align}
  \label{eq:b:0}
  1
  \ - \
  \left(
  \PP[B_{1,n}^{c}]
  \right)^n
  \ \le \
  \frac{\varepsilon}{4}
  \qquad \text{for all} \ n\ge N(c_1,c_2)
  \,.
\end{align}
  It holds
  \begin{align}
    \label{eq:b:1}
    \begin{split}
  &
    \PP[B_{1,n}^c]
    \ = \
    \PP
    \left[
    Y_{1,n}(0)\ > \ -c_2
    \qquad\text{or}\qquad
    \sup_{t\in K}
    Y_{1,n}(t)
    \ <  \
    -c_1
    \right]
    \\&
    \ = \
    1
    \ - \
    \PP
    \left[
    \max_{i\in\NN} U^{(i)}\ \le \ -c_2 + \log(n)
    \right]
    \\&
    \qquad +\
      \PP
    \left[
    \max_{i\in\NN}U^{(i)}\ \le \ -c_2 + \log(n)
    \qquad\text{and}\qquad
    \max_{i\in\NN}
    \sup_{t\in K}
    X_t^{(i)}(U^{(i)})
    \ <  \
    -c_1
    +\log(n)
    \right]
    \,,
    \end{split}
    \end{align}
    where the first probability in \eqref{eq:b:1} satisfies
    \begin{align}
      \label{eq:b:2}
       \begin{split}
    \PP
    \left[
    \max_{i\in\NN} U^{(i)}\ \le \ -c_2 + \log(n)
    \right]
      &
    \ = \
    \PP
    \left[
    \sum_{i\in\NN}
    \ind
    \left\{
      U^{(i)}\ > \ -c_2 + \log(n)
    \right\}
      \ = \ 0
    \right]
      \\&
      \ = \
   \exp
   \left(
   - \int_{-c_2+\log(n)}^{\infty}
   \alpha(x)\exp(-x)\,\mathrm{d}x
   \right)
       \,.
       \end{split}
  \end{align}
  For the second probability in \eqref{eq:b:1}, note that
    \begin{align}
      \label{eq:b:3}
  \begin{split}
      &
      \left\{
    \max_{i\in\NN} U^{(i)}\ \le \ -c_2 + \log(n)
    \qquad\text{and}\qquad
    \max_{i\in\NN}
    \sup_{t\in K}
    X_t^{(i)}(U^{(i)})
    \ <  \
    -c_1
    +\log(n)
      \right\}
      \\&
      \ = \
      \bigcap_{i\in\NN}
      \left\{
     U^{(i)}\ \le \ -c_2 + \log(n)
      \right\}
      \ \cap\
      \bigcap_{i\in\NN}
      \left\{
    \sup_{t\in K}
    X_t^{(i)}(U^{(i)})
    \ <  \
    -c_1
    +\log(n)
      \right\}
      \\&
      \ \subset \
  \bigcap_{i\in\NN}
      \left\{
     U^{(i)}\ \le \ -c_2 + \log(n)
      \right\}
      \\&
      \qquad
      \qquad
      \qquad
       \cap\
      \left\{
    \sup_{t\in K}
    X_t^{(i)}(U^{(i)})
    \ <  \
    -c_1
    +\log(n)
    \qquad\text{or}\qquad
    U^{(i)}> -c_2 + \log(n)
      \right\}
      \\&
      \ = \
           \bigcap_{i\in\NN}
      \left(
      \left\{
     U^{(i)}\ > \ -c_2 + \log(n)
      \right\}
      \right.
      \\&
      \left.
      \qquad
      \qquad
      \qquad
       \cup\
      \left\{
    \sup_{t\in K}
    X_t^{(i)}(U^{(i)})
    \ \ge  \
    -c_1
    +\log(n)
    \qquad\text{and}\qquad
    U^{(i)}\le -c_2 + \log(n)
      \right\}
      \right)^c
      \\
      &
    \ = \
    \left\{
    \sum_{i\in\NN}
    \ind\left\{U^{(i)}>-c_2+\log(n)
    \qquad\text{or}\qquad
    \right.
    \right.
    \\&
    \qquad
    \qquad
    \qquad
    \left.
    \left.
    \left(
    \sup_{t\in K}X^{(i)}(U^{(i)})\ge -c_1+\log(n)
    \qquad\text{and}\qquad
    U^{(i)}\le-c_2+\log(n)
    \right)
    \right\}
    = 0
    \right\}
    \,,
  \end{split}
    \end{align}
    where the probability of the latter set, by
  Theorem~\ref{thm:brown}, is
    \begin{align}
      \label{eq:b:4}
    \begin{split}
      &
    \exp
    \left(
    -
    \int_{-c_2+\log(n)}^{\infty}
    \alpha(x)\cdot\exp(-x)\ \mathrm{d}x
    \right.
    \\&
    \left.
    \qquad
    \qquad
    \qquad
     -\
    \int_{-\infty}^{-c_2+\log(n)}
    \PP[\sup_{t\in K}X_t(x)>-c_1+\log(n)]
    \cdot
    \alpha(x)\cdot\exp(-x)\ \mathrm{d}x
    \right)
    \,.
    \end{split}
  \end{align}
It follows from \eqref{eq:b:1}-\eqref{eq:b:4} that
  \begin{align}
    \label{eq:b:5}
    \begin{split}
  &
  1 - \PP[B_{1,n}^c]^n
  \\&
    \ = \
  1
  -
  \\&
  \qquad
  \left(
  1
  -
   \exp
   \left(
   - \int_{-c_2+\log(n)}^{\infty}
   \alpha(x)\exp(-x)\,\mathrm{d}x
   \right)
   \right.
   \\&
   \qquad
  \qquad
   +\
  \exp
    \left(
    -
    \int_{-c_2+\log(n)}^{\infty}
    \alpha(x)\cdot\exp(-x)\ \mathrm{d}x
    \right.
    \\&
    \left.
    \left.
    \qquad
    \qquad
    \qquad
  \qquad
    \qquad
     -\
    \int_{-\infty}^{-c_2+\log(n)}
    \PP[\sup_{t\in K}X_t(x)>-c_1+\log(n)]
    \cdot
    \alpha(x)\cdot\exp(-x)\ \mathrm{d}x
    \right)
    \right)^n
    \\&
    \ = \
     1
  -
  \\&
  \qquad
  \left(
  1
  -
   \exp
   \left(
   - \int_{-c_2+\log(n)}^{\infty}
   \alpha(x)\exp(-x)\,\mathrm{d}x
   \right)
   \right.
   \\&
   \qquad
  \qquad
  \left.
   \times\
   \left(
   1
    \ -\
    \exp\left(
     -\
    \int_{-\infty}^{-c_2+\log(n)}
    \PP[\sup_{t\in K}X_t(x)>-c_1+\log(n)]
    \cdot
    \alpha(x)\cdot\exp(-x)\ \mathrm{d}x
    \right)
    \right)
    \right)^n
    \\&
    \ = \
     1
  -
  \\&
  \qquad
  \left(
  1
  -
   \exp
   \left(
   -
   \frac{1}{n}
   \int_{-c_2}^{\infty}
   \alpha(x+\log(n))\exp(-x)\,\mathrm{d}x
   \right)
   \right.
   \\&
  \qquad
  \left.
   \times\
   \left(
   1
    \ -\
    \exp\left(
     -\
     \frac{1}{n}
    \int_{-\infty}^{-c_2}
    \PP[\sup_{t\in K}X_t(x+\log(n))>-c_1+\log(n)]
    \cdot
    \alpha(x+\log(n))\cdot\exp(-x)\ \mathrm{d}x
    \right)
    \right)
    \right)^n
    \\&
    \ \le \
      1
  -
  \\&
  \qquad
  \left(
  1
  -
   \exp
   \left(
   -
   \frac{1}{n}
   \int_{-c_2}^{\infty}
   \alpha(x+\log(n))\exp(-x)\,\mathrm{d}x
   \right)
   \right.
   \\&
  \qquad
  \left.
   \times\
     \frac{1}{n}
    \int_{-\infty}^{-c_2}
    \PP
    \left[
    \sup_{t\in K}X_t(x+\log(n))>-c_1+\log(n)
    \right]
    \cdot
    \alpha(x+\log(n))\cdot\exp(-x)\ \mathrm{d}x
    \right)^n
    \,,
    \end{split}
  \end{align}
  where in the last inequality we used
  $1-\exp(-x)\le x$.
  It remains to show
that, for $c_2=c_2(c_1)$ large enough, we have
  \begin{align}
    \label{eq:b:6}
    \begin{split}
  \int^{-c_2}_{-\infty}
  \PP
  \left[
  \sup_{t\in K}
    X_t(x+\log(n))
  \ge -c_1
  +\log(n)
  \right]
    \alpha(x+\log(n))
    \exp(-x)
  \,\mathrm{d}x
  \ \le \
  \frac{\varepsilon}{4}
  \,.
    \end{split}
  \end{align}
Then, for $n\ge N(c_1,c_2)$ sufficiently large,
  \begin{align*}
    &
    -
   \exp
   \left(
   -
   \frac{1}{n}
   \int_{-c_2}^{\infty}
   \alpha(x+\log(n))\exp(-x)\,\mathrm{d}x
   \right)
   \\&
  \qquad
   \times\
     \frac{1}{n}
    \int_{-\infty}^{-c_2}
    \PP[\sup_{t\in K}X_t(x+\log(n))>-c_1+\log(n)]
    \cdot
    \alpha(x+\log(n))\cdot\exp(-x)\ \mathrm{d}x
    \\&
    \ \ge \
    \frac{-\varepsilon}{4n}
    \ > \ -1
    \,,
  \end{align*}
  and applying
 Bernoulli's inequality together with
 \eqref{eq:b:5} gives
  \begin{align*}
    &
    1
    \ - \
    \left(
    \PP[B_{1,n}^c]
    \right)^n
    \\&
    \ \le \
    1
    \\&
    \qquad
    -\
  \left(
  1
  -
   \exp
   \left(
   -
   \frac{1}{n}
   \int_{-c_2}^{\infty}
   \alpha(x+\log(n))\exp(-x)\,\mathrm{d}x
   \right)
   \right.
   \\&
  \qquad
  \left.
   \times\
     \frac{1}{n}
    \int_{-\infty}^{-c_2}
    \PP[\sup_{t\in K}X_t(x+\log(n))>-c_1+\log(n)]
    \cdot
    \alpha(x+\log(n))\cdot\exp(-x)\ \mathrm{d}x
    \right)^n
    \\&
    \ \le \
    1
    \ - \
    \left(
    1
    -
    \frac{\varepsilon}{4}
    \right)
    \ = \
    \frac{\varepsilon}{4}
    \,.
      \end{align*}
      Let us show \eqref{eq:b:6}.
      Observe that
  \begin{align*}
    (-c_1+\log(n))-(x+\log(n))\ =\  - c_1 - x
    &\ >\  -c_1 + c_2
    \qquad\text{for}\ x \in (-\infty,-c_2)
    \,.
  \end{align*}
  Hence, there exists $c_2=c_2(c_1)$ large enough such that, by
  Lemma~\ref{lem:dynkin} and Assumption~\ref{asu:alpha},
  there exist $v>0$, depending only on $K$, and a constant $C_{v,K,\alpha}>0$ such that, for $x<-c_2$,
  \begin{align*}
    &
  \PP
  \left[
  \sup_{t\in K}
    X_t(x+\log(n))
    \ge -c_1+\log(n)
  \right]
    \\&
    \ \le \
C_{v,K,\alpha}
\cdot
    \PP[
    L_{\overline{K}\cdot \overline{\alpha}}
    \ge
    (-c_1+\log(n))
    -
    (x+\log(n))
    -
    v
    ]
    \\&
    \ = \
C_{v,K,\alpha}
\cdot
    \PP[
    L_{\overline{K}\cdot \overline{\alpha}}
    \ge
    -c_1 - x - v
        ]
        \,,
  \end{align*}
  and therefore
  \begin{align*}
    &
  \int^{-c_2}_{-\infty}
  \PP
  \left[
  \sup_{t\in K}
    X_t(x+\log(n))
    \ge -c_1+\log(n)
  \right]
    \alpha(x+\log(n))
    \exp(-x)
  \,\mathrm{d}x
    \\&
    \ \le \
C_{v,K,\alpha}
    \cdot
    \overline{\alpha}
    \int_{-\infty}^{-c_2}
    \PP
    \left[
    L_{\overline{K}\cdot\overline{\alpha}} \ge -c_1 - x - v
    \right]
    \exp(-x)
    \,\mathrm{d}x
    \\&
    \ = \
    C_{v,K,\alpha}
    \cdot
    \overline{\alpha}
    \cdot
    \exp(c_1+v)
    \int_{c_2-c_1- v}^{\infty}
    \PP
    \left[
    L_{\overline{K}\sup \alpha} \ge x
    \right]
    \exp(x)
    \,\mathrm{d}x
    \\&
    \ = \
C_{v,K,\alpha}
    \cdot
    \overline{\alpha}
    \cdot
    \exp(c_1+v)
    \cdot
    \EE
    \left[
    \exp(L_{\overline{K}\overline{\alpha}})
    \ind \left\{
    L_{\overline{K}\overline{\alpha}}
    \ge c_2 - c_1 - v
    \right\}
    \right]
    \,.
  \end{align*}
  Since $\EE[\exp(L_t)]=1$ for all $t\ge 0$, we can choose $c_2>c_1$ large enough to obtain \eqref{eq:b:6}.
  \\
  \textbf{Analysis of $n \PP[C_{1,n}]$}
\\
Following Remark~\ref{rem:order}, we
assume without loss of generality that $U^{(1)}=\max \{U^{(i)}\}_{i\in\NN}$.
We decompose
\begin{align*}
  &
  C_{1,n}
  \ = \
  \left\{
  \omega(Y_{1,n},\delta,K)
  > a \qquad\text{and}\qquad
  Y_{1,n}(0)
  > -c_2
  \right\}
  \\&
  \ \subset\
  \left\{
  \omega((X_t^{(1)}(U^{1})_{t\ge 0}), \delta, K)
  > a
  \quad\text{and}\quad
  U^{(1)}> - c_2 + \log(n)
  \quad\text{and}\quad
  (Z_t)_{t\in K}
  \ = \
  (X_t^{(1)}(U^{(1)}))_{t\in K}
  \right\}
  \\&
  \qquad \cup
  \left\{
  U^{(1)}
  > - c_2 + \log(n)
  \qquad\text{and}\qquad
  \exists_{j>1}\,
  \exists_{t\in K}\,
  X^{(j)}_t(U^{(j)})
   \ > \
  X^{(1)}_t(U^{(1)})
  \right\}
  \\&
  \
  \subset
  \
  D_{1,n}
  \
  \cup
  \
  D_{2,n}
  \,,
\end{align*}
where
\begin{align*}
  D_{1,n}
  &
  \ := \
  \left\{
  \omega((X_t^{(1)}(U^{(1)})_{t\ge 0}), \delta, K)
  > a
  \quad\text{and}\quad
  U^{(1)}> - c_2 + \log(n)
  \right\}
  \,,
  \\
  D_{2,n}
  &
  \ := \
  \left\{
  U^{(1)}
  > - c_2 + \log(n)
  \qquad\text{and}\qquad
  \exists_{j>1}\,
  \exists_{t\in K}\,
  X^{(j)}_t(U^{(j)})
   \ > \
  X^{(1)}_t(U^{(1)})
  \right\}
  \,,
\end{align*}
so that
\begin{align*}
  n\PP[C_{1,n}]
  \ \le \
  n
  \left(
  \PP[D_{1,n}]
  \ + \
  \PP[D_{2,n}]
  \right)
  \,.
\end{align*}
\textbf{Analysis of $n\PP[D_{1,n}]$}
\\
We show that there exists $\delta>0$ sufficiently small such that, for $n\ge N(c_1,c_2,\delta)$ sufficiently large,
\begin{align}
  \label{eq:d1:0}
  n
  \PP[D_{1,n}]
  \ \le \
  \frac{\varepsilon}{8}
  \,.
\end{align}
Observe that
\begin{align}
  \label{eq:dist:u1}
  \PP[U^{(1)}\le x]
  \ = \
  \exp
  \left(
  -\int_{x}^{\infty}
  \alpha(s)
  \exp(-s)
  \,\mathrm{d}s
  \right)
  \,,
  \qquad
  x\in\RR
  \,,
\end{align}
so that
\begin{align}
  \label{eq:d1:1}
  \begin{split}
  n\cdot\PP[D_{1,n}]
  &
  \ = \
  n
  \cdot
  \int_{-c_2+\log(n)}^{\infty}
  \PP[\omega((X_t(x))_{t\ge 0},\delta, K) > a]
  \,\mathrm{d}\PP_{U^{(1)}}(x)
  \\&
  \ = \
  n
  \cdot
  \int_{-c_2+\log(n)}^{\infty}
  \PP[\omega((X_t(x))_{t\ge 0},\delta, K) > a]
  \cdot
  \alpha(x)
  \cdot
  \exp(-x)
  \cdot
  \PP[U^{(1)}\le x]
  \,\mathrm{d}x
  \\&
  \ \le \
  \overline{\alpha}
  \int_{-c_2}^{\infty}
  \PP[\omega((X_t(x+\log(n))-\log(n))_{t\ge 0},\delta, K) > a]
  \cdot
  \exp(-x)
  \,\mathrm{d}x
  \\&
  \ = \
  \overline{\alpha}
  \int_{-c_2}^{\infty}
  \PP[\omega((X_t^n)_{t\ge 0},\delta, K) > a]
  \cdot
  \exp(-x)
  \,\mathrm{d}x
  \,,
  \end{split}
\end{align}
where in the last inequality we used a change of variables and that $\omega$ is invariant under spatial shift.
To apply weak convergence of $(X_t^n)_{t\ge 0}$ to $(L_t)_{t\ge 0}$, we write
\begin{align}
  \label{eq:d1:2}
  \begin{split}
  &
  \int_{-c_2}^{\infty}
  \PP[\omega((X_t(x+\log(n))-\log(n))_{t\ge 0},\delta, K) > a]
  \cdot
  \exp(-x)
  \,\mathrm{d}x
  \\&
  \ \le \
  \int_{-c_2}^{\infty}
  \PP[\omega((L_t)_{t\ge 0},\delta, K) > a]
  \cdot
  \exp(-x)
  \,\mathrm{d}x
  \\&
  \qquad + \
  \left|
  \int_{-c_2}^{\infty}
  \PP[\omega((X_t(x+\log(n))-\log(n))_{t\ge 0},\delta, K) > a]
  \cdot
  \exp(-x)
  \,\mathrm{d}x
  \right.
  \\
  &
  \qquad
  \qquad
  \left.
   - \
  \int_{-c_2}^{\infty}
  \PP[\omega((L_t)_{t\ge 0},\delta, K) > a]
  \cdot
  \exp(-x)
  \,\mathrm{d}x
  \right|
  \,.
  \end{split}
\end{align}
For the first term in the upper bound of \eqref{eq:d1:2}, note that
since $(L_{t})_{t\ge 0}$ has continuous paths we can choose $\delta(c_2)>0$ small enough, such that
\begin{align*}
  \PP[\omega((L_t)_{t\ge 0},\delta, K) > a]
  \ \le \
  \frac{\varepsilon}{16}
  \exp(-c_2)
  \,,
\end{align*}
so that
\begin{align}
  \label{eq:d1:3}
  \int_{-c_2}^{\infty}
  \PP[\omega((L_t)_{t\ge 0},\delta, K) > a]
  \cdot
  \exp(-x)
  \,\mathrm{d}x
  \ \le \
  \frac{\varepsilon}{16}
  \,.
\end{align}
For the second term in the upper bound of \eqref{eq:d1:2} note that
since
\begin{align*}
  \int_{-c_2}^{\infty}
  \exp(-x)
  \,\mathrm{d}x
  \ = \
  \exp(c_2)
  \ < \ \infty
  \,,
\end{align*}
and, by Lemma~\ref{lem:conv}(ii), as $n\to\infty$,
\begin{align*}
  \PP[\omega((X_t(x+\log(n))-\log(n))_{t\ge 0},\delta, K) > a]
  \ \to \
  \PP[\omega((L_t)_{t\ge 0},\delta, K) > a]
  \,,
\end{align*}
so that we can choose $N(c_1,c_2,\delta)$ large enough, such that for $n\ge N(c_1,c_2,\delta)$ it holds
\begin{align}
  \label{eq:d1:4}
  \begin{split}
  &
  \left|
  \int_{-c_2}^{\infty}
  \PP[\omega((X_t(x+\log(n))-\log(n))_{t\ge 0},\delta, K) > a]
  \cdot
  \exp(-x)
  \,\mathrm{d}x
  \right.
  \\
  &
  \qquad
  \qquad
  \left.
   - \
  \int_{-c_2}^{\infty}
  \PP[\omega((L_t)_{t\ge 0},\delta, K) > a]
  \cdot
  \exp(-x)
  \,\mathrm{d}x
  \right|
  \\&
  \ \le \
  \frac{\varepsilon}{16}
  \,.
  \end{split}
\end{align}
Putting together \eqref{eq:d1:1}-\eqref{eq:d1:4} yields \eqref{eq:d1:0}.
\\
\textbf{Analysis of $n\PP[D_{2,n}]$}
\\
We show that there exists $N\in \NN$ sufficiently larger such that
\begin{align}
  \label{eq:d2:0}
  n\PP[D_{2,n}]
  \ \le \
  \frac{\varepsilon}{8}
  \qquad\text{for all}\ n\ge N
  \,.
\end{align}
\textbf{Step 1}
\\
By \eqref{eq:d1:1} it holds
\begin{align}
  \label{eq:d2:1}
  \begin{split}
  &
  n\PP[D_{2,n}]
  \ = \
  n
  \PP
  \left[
  U^{(1)}>-c_2 + \log(n)
  \quad\text{and}\quad
  \exists_{j>1}\,\exists_{t\in K}\,
  X_t^{(j)}(U^{(j)})
  \
  \ge
  \
  X_t^{(1)}(U^{(1)})
  \right]
  \\&
  \ = \
  n
  \int_{-c_2+\log(n)}^{\infty}
  \PP
  \left[
  \exists_{j>1}\,\exists_{t\in K}\,
  X_t^{(j)}(U^{(j)})
  \
  \ge
  \
  X_t^{(1)}(U^{(1)})
  \mid U^{(1)}=x
  \right]
  \,\mathrm{d}\PP_{U^{(1)}}(x)
  \\&
  \ = \
  n
  \int_{-c_2+\log(n)}^{\infty}
  \PP
  \left[
  \exists_{j>1}\,\exists_{t\in K}\,
  X_t^{(j)}(U^{(j)})
  \
  \ge
  \
  X_t^{(1)}(U^{(1)})
  \mid U^{(1)}=x
  \right]
  \\&
  \qquad
  \qquad
  \times
  \alpha(x)
  \cdot
  \exp(-x)
  \cdot
  \PP[U^{(1)}\le x]
  \,\mathrm{d}x
  \\&
  \ = \
  \int_{-c_2}^{\infty}
  \PP
  \left[
  \exists_{j>1}\,\exists_{t\in K}\,
  X_t^{(j)}(U^{(j)})
  \
  \ge
  \
  X_t^{(1)}(U^{(1)})
  \mid U^{(1)}=x+\log(n)
  \right]
  \\&
  \qquad
  \qquad
  \times
  \alpha(x+\log(n))
  \cdot
  \exp(-x)
  \cdot
  \PP[U^{(1)}\le x+\log(n)]
  \,\mathrm{d}x
  \\&
  \ \le \
  \overline{\alpha}
  \exp(c_2)
  \sup_{x>-c_2}
  \PP
  \left[
  \exists_{j>1}\,\exists_{t\in K}\,
  X_t^{(j)}(U^{(j)})
  \
  \ge
  \
  X_t^{(1)}(U^{(1)})
  \mid U^{(1)}=x+\log(n)
  \right]
  \,.
  \end{split}
\end{align}
\textbf{Step 2}
\\
We define, for $g\in \mathbb{C}([0,\infty))$, the transformation $\widetilde{g}_{}$ by
\begin{align*}
  \widetilde{g}_{}
  \
  \colon
  \
  [0,\infty)\times \RR
  \ \to \
  [0,\infty)
  \,,\qquad
  (t,z)
  \ \mapsto\
  \left(
  \int_0^{(\cdot)}
  \alpha(g(r)
  + z)
  \,\mathrm{d}r
  \right)^{-1}
  (t)
  \,.
\end{align*}
Then, conditional on $(L_t^{(1)})_{t\ge 0}=g$, for some $g\in \mathbb{C}([0,\infty))$, we get
\begin{align*}
  X_t^{(1)}
  (z)
  \ = \
  g(\widetilde{g}_{}(t,z))
  \ + \
  z
    \,.
\end{align*}
By
independence of $(L^{(1)}_t)_{t\ge 0}$ from the remaining variables,
\begin{align}
  \label{eq:d2:2}
  \begin{split}
  &
  \PP
  \left[
  \exists_{j>1}\,\exists_{t\in K}\,
  X_t^{(j)}(U^{(j)})
  \
  \ge
  \
  X_t^{(1)}(U^{(1)})
  \mid U^{(1)}=x+\log(n)
  \right]
  \\&
  \ \le \
  \PP
  \left[
  \exists_{j>1}\,
  \sup_{t\in K}
  X_t^{(j)}(U^{(j)})
  \
  \ge
  \
  \inf_{t\in K}
  X_t^{(1)}(x+\log(n))
  \mid U^{(1)}=x+\log(n)
  \right]
  \\&
  \ = \
  \PP
  \left[
  \sum_{j>1}
  \ind\{
  \sup_{t\in K}
  X_t^{(j)}(U^{(j)})
  \
  \ge
  \
  \inf_{t\in K}
  X_t^{(1)}(x+\log(n))
  \}
  \ > \ 0
  \
  \Bigg|
  \
  U^{(1)}=x+\log(n)
  \right]
  \\&
  \ = \
  \int_{\mathbb{C}([0,\infty))}
  \,\mathrm{d}\PP_{(L^{(1)}_t)_{t\ge 0}}(g)
  \\&
  \qquad
  \PP
  \left[
  \sum_{j>1}
  \ind\{
  \sup_{t\in K}
  X_t^{(j)}(U^{(j)})
  \
  \ge
  \
  \inf_{t\in K}
  g(\widetilde{g}(t, x+\log(n)))
  \ + \
  x+\log(n)
  \}
  \ > \ 0
  \
  \Bigg|
  \
  U^{(1)}=x+\log(n)
  \right]
  \\&
  \ \le \
  \int_{\mathbb{C}([0,\infty))}
  \ind\left\{
  \inf_{t\in [0,\overline{K}/\underline{\alpha}]}g(t)>-R
  \right\}
  \,\mathrm{d}\PP_{(L^{(1)}_t)_{t\ge 0}}(g)
  \\&
  \qquad
  \PP
  \left[
  \sum_{j>1}
  \ind\{
  \sup_{t\in K}
  X_t^{(j)}(U^{(j)})
  \
  \ge
  \
  \inf_{t\in K}
  g(\widetilde{g}(t, x+\log(n)))
  \ + \
  x+\log(n)
  \}
  \ >\ 0
  \
  \Bigg|
  \
  U^{(1)}=x+\log(n)
  \right]
  \\&
  \qquad
  + \
  \PP
  \left[
  \inf_{t\in [0,\overline{K}/\underline{\alpha}]}L_t\le -R
  \right]
  \,,
  \end{split}
\end{align}
where
\begin{align}
  \label{eq:d2:3}
  \PP
  \left[
  \inf_{t\in [0,\overline{K}/\underline{\alpha}]}L_t\le -R
  \right]
  \ < \
  \frac{\varepsilon}{16}
  \exp(-c_2)
  \,,
\end{align}
for $R>0$ large enough.
\\
\textbf{Step 3}
\\
Write
\begin{align*}
 S
  \ := \
  \left\{
  \widetilde{g}_{}(t,z)
  \mid
  t\in K
  \,,
  z\in\RR
  \right\}
  \ = \
  \bigcup_{z\in\RR}
  \left\{
  \widetilde{g}_{}(t,z)
  \mid
  t\in K
  \right\}
  \ =: \
  \bigcup_{z\in\RR}
  S_z
  \,,
\end{align*}
where
\begin{align*}
  S_z
  \ = \
  \left(
  \widetilde{g}_{}(\cdot,z)
  \right)^{-1}
  (K)
  \
  \subset
  \
  \left(
  \widetilde{g}_{}(\cdot,z)
  \right)^{-1}
  ([0,\overline{K}])
  \
  \subset
  \
  [0,\overline{K}/\underline{\alpha}]
  \,,
\end{align*}
and therefore $S\subset
  [0,\overline{K}/\underline{\alpha}]$, independent of the choice of $g\in \mathbb{C}([0,\infty))$.
  Hence,
\begin{align*}
  \inf_{z\in\RR}
  \inf_{t\in K}
  g(\widetilde{g}(t, z))
  \ = \
  \inf_{t\in S}
  g(t)
  &
  \ \ge \
  \inf_{t\in [0,\overline{K}/\underline{\alpha}]}
  g(t)
   \,,
\end{align*}
so that
\begin{align}
  \label{eq:d2:4}
  \begin{split}
  &
 \int_{\mathbb{C}([0,\infty))}
  \ind\left\{
    \inf_{t\in [0,\overline{K}/\underline{\alpha}]}g(t)>-R
  \right\}
  \,\mathrm{d}\PP_{(L^{(1)}_t)_{t\ge 0}}(g)
  \\&
  \qquad
  \PP
  \left[
  \sum_{j>1}
  \ind\{
  \sup_{t\in K}
  X_t^{(j)}(U^{(j)})
  \
  \ge
  \
  \inf_{t\in K}
  g(\widetilde{g}(t, x+\log(n)))
  \ + \
  x+\log(n)
  \}
  \ >\ 0
  \
  \Bigg|
  \
  U^{(1)}=x+\log(n)
  \right]
  \\&
  \ \le \
  \PP
  \left[
  \sum_{j>1}
  \ind\{
  \sup_{t\in K}
  X_t^{(j)}(U^{(j)})
  \
  \ge
  \
  -R
  \ + \
  x+\log(n)
  \}
  \ >\ 0
  \
  \Bigg|
  \
  U^{(1)}=x+\log(n)
  \right]
  \,.
  \end{split}
\end{align}
\textbf{Step 4}
\\
By the Slivniak-Mecke Theorem (see for example \cite[Proposition~13.1.VII]{daleyIntroductionTheoryPoint2008}),
$\{U^{(j)}\}_{j>1}$ conditioned on $U^{(1)}=x+\log(n)$ is a PPP with intensity measure $\eta[\cdot \cap (-\infty,x+\log(n))]$, so that $\{\sup_{t\in K}X^{(j)}(U^{(j)})\}_{j>1}$ is a PPP on the real line with intensity
\begin{align*}
  \int_{-\infty}^{x+\log(n)}
  \PP
  \left[
  \sup_{t\in K}X_t(z)\in A
  \right]
  \eta(z)
  \,\mathrm{d}z
  \qquad\text{for all}\ A \in \mathcal{B}(\RR)
  \,.
\end{align*}
It therefore holds
\begin{align}
  \label{eq:d2:5}
  \begin{split}
  &
  \PP
  \left[
  \sum_{j>1}
  \ind\{
  \sup_{t\in K}
  X_t^{(j)}(U^{(j)})
  \
  \ge
  \
  -R
  \ + \
  x+\log(n)
  \}
  \ > \ 0
  \
  \Bigg|
  \
  U^{(1)}=x+\log(n)
  \right]
  \\&
  \ = \
  1
  \\&
  \qquad
  - \
  \PP
  \left[
  \sum_{j>1}
  \ind\{
  \sup_{t\in K}
  X_t^{(j)}(U^{(j)})
  \
  \ge
  \
  -R
  \ + \
  x+\log(n)
  \}
  \ = \ 0
  \
  \Bigg|
  \
  U^{(1)}=x+\log(n)
  \right]
  \\&
  \ = \
  1 \ - \
  \exp
  \left(
  -
  \int_{-\infty}^{x+\log(n)}
  \PP
  \left[
  \sup_{t\in K}X_t(z)
\
  \ge
  \
  -R
  \ + \
  x+\log(n)
  \right]
  \eta(z)
  \,\mathrm{d}z
  \right)
  \\&
  \ \le \
  \int_{-\infty}^{x+\log(n)}
  \PP
  \left[
  \sup_{t\in K}X_t(z)
\
  \ge
  \
  -R
  \ + \
  x+\log(n)
  \right]
  \eta(z)
  \,\mathrm{d}z
  \,.
  \end{split}
\end{align}
\\
\textbf{Step 5}
\\
Observe that
\begin{align}
  \label{eq:d2:6}
  \begin{split}
  &
  \int_{-\infty}^{x+\log(n)}
  \PP
  \left[
  \sup_{t\in K}X_t(z)
\
  \ge
  \
  -R
  \ + \
  x+\log(n)
  \right]
  \eta(z)
  \,\mathrm{d}z
  \\&
  \ \le \
  \int_{x+\log(n)-R-v}^{x+\log(n)}
  \eta(z)
  \,\mathrm{d}z
  \\&
  \qquad
   + \
  \int_{-\infty}^{x+\log(n)-R-v}
  \PP
  \left[
  \sup_{t\in K}X_t(z)
\
  \ge
  \
  -R
  \ + \
  x+\log(n)
  \right]
  \eta(z)
  \,\mathrm{d}z
  \,,
  \end{split}
  \end{align}
  where we choose $v$ as in Lemma~\ref{lem:dynkin}, and note that for $x>-c_2$
  \begin{align}
    \label{eq:d2:7}
    \begin{split}
    &
  \int_{x+\log(n)-R-v}^{x+\log(n)}
  \eta(z)
  \,\mathrm{d}z
  \ \le \
    \overline{\alpha}
    \frac{1}{n}
    \exp(-x)
    \left(
     \exp(R + v) - 1
    \right)
    \\&
    \ \le \
    \exp(c_2)
    \overline{\alpha}
    \frac{1}{n}
    \left(
    \exp(R + v) - 1
    \right)
    \\&
    \ \le \
    \exp(-c_2)
    \frac{\varepsilon}{16}
    \,,
    \end{split}
  \end{align}
  for $n\ge N(c_1,c_2,\delta,v,R)$ large enough.
  \\
  \textbf{Step 6}
  \\
  By Lemma~\ref{lem:dynkin}, there exists a constant $C_{v,\overline{K}}>0$ such that
  \begin{align}
    \label{eq:d2:8}
    \begin{split}
    &
  \int_{-\infty}^{x+\log(n)-R-v}
  \PP
  \left[
  \sup_{t\in K}X_t(z)
\
  \ge
  \
  -R
  \ + \
  x+\log(n)
  \right]
  \eta(z)
  \,\mathrm{d}z
    \\&
    \ \le \
    \overline{\alpha}
  \int_{-\infty}^{x+\log(n)-R-v}
  \PP
  \left[
    L_{\overline{\alpha}\cdot \overline{K}}
  \
  \ge
  \
  -R
  \ + \
  x+\log(n)
  - z - v
  \right]
    \exp(-z)
  \,\mathrm{d}z
    \\&
    \ = \
    \overline{\alpha}
    \exp(R+v)
    \exp(-x)
    \frac{1}{n}
    \int_0^{\infty}
  \PP
  \left[
    L_{\overline{\alpha}\cdot \overline{K}}
  \
  \ge
  \
  z
  \right]
    \exp(-z)
  \,\mathrm{d}z
    \\&
    \ \le \
    \overline{\alpha}
    \exp(R+v)
    \exp(c_2)
    \frac{1}{n}
    \EE[\exp(L_{\overline{\alpha}\cdot \overline{K}})]
    \\&
    \ \le \
    \exp(-c_2)
    \frac{\varepsilon}{16}
    \,,
    \end{split}
  \end{align}
  for $n\ge N(c_1,c_2,\delta,v,R)$ large enough.
  Combining
  \eqref{eq:d2:1}-\eqref{eq:d2:8} yields
\begin{align*}
  n\PP[D_{2,n}] \ \le \
  \varepsilon
  \exp(c_2)
  \exp(-c_2)
  \left(
  \frac{1}{16}
  \ + \
  \frac{1}{16}
  \right)
  \ = \
  \frac{\varepsilon}{8}
  \,,
\end{align*}
that is, \eqref{eq:d2:0}.
This finishes the proof.
\end{proof}
\section*{Acknowledgments}
The author is grateful to Marco Oesting for many fruitful discussions. The author also wants to thank Lukas Trottner for introducing him to \cite{azemaMesureInvarianteProcessus1969}.

\end{document}